\numberwithin{equation}{section}
\newtheorem{theorem}{\textbf{Theorem}}[section]
\newtheorem{theorem*}{\textbf{Theorem}}
\newtheorem{definition}[theorem]{\textbf{Definition}}
\newtheorem{proposition}[theorem]{\textbf{Proposition}}
\newtheorem{problem}[theorem]{\textbf{Problem}}
\newtheorem{corollary}[theorem]{\textbf{Corollary}}
\newtheorem{remark}[theorem]{\textbf{Remark}}
\newtheorem{definition/proposition}[theorem]{\textbf{Definition/Proposition}}
\providecommand{\customgenericname}{}
\newcommand{\newcustomtheorem}[2]{%
	\newenvironment{#1}[1]
	{%
		\renewcommand\customgenericname{#2}%
		\renewcommand\theinnercustomgeneric{##1}%
		\innercustomgeneric
	}
	{\endinnercustomgeneric}
}
\def\D{{\rm d}}
\DeclareMathOperator{\Tor}{Tor}
\title{Vanishing of Symplectic Homology and Obstruction to Flexible Fillability}
\author{Zhengyi Zhou}
\begin{document}
\maketitle
\begin{abstract}
	For any asymptotically dynamically convex contact manifold $Y$, we show that $SH_*(W)=0$ is a property independent of the choice of topologically simple (i.e.\ $c_1(W)=0$ and $\pi_{1}(Y)\rightarrow \pi_1(W)$ is injective) Liouville filling $W$. In particular, if $Y$ is the boundary of a flexible Weinstein domain, then any topologically simple Liouville filling $W$ has vanishing symplectic homology.  As a consequence, we answer a question of Lazarev partially: a contact manifold $Y$ admitting flexible fillings determines the integral cohomology of all the topologically simple Liouville fillings of $Y$.  The vanishing result provides an obstruction to flexible fillability. As an application, we show that all Brieskorn manifolds of dimension $\ge 5$ cannot be filled by flexible Weinstein manifolds.
\end{abstract}
\maketitle

\section{Introduction}
The study of the topology of fillings of a given contact manifold was initiated by Gromov and Eliashberg in the 1980s.  A combination of results in \cite{eliashberg1990filling,gromov1985pseudo} shows that the Weinstein filling of the standard contact $3$-sphere $(S^3,\xi_{std})$ is unique up to symplectic deformation.  In general, one can ask in what case contact manifolds can determine their Liouville fillings. For the topology of the fillings, Eliashberg-Floer-McDuff \cite{mcduff1991symplectic} proved that any Liouville filling of $(S^{2n-1},\xi_{std})$ is diffeomorphic to $D^{2n}$. Oancea-Viterbo \cite{oancea2012topology} showed that $H_*(Y) \to H_*(W)$ is surjective for a simply-connected subcritically-fillable contact manifold $Y$ and any Liouville filling $W$. Barth-Geiges-Zehmisch \cite{barth2016diffeomorphism} generalized the Eliashberg-Floer-McDuff theorem to the subcritical case: all Liouville fillings of a simply-connected subcritically-fillable contact manifold are diffeomorphic to each other. Regarding the symplectic topology of fillings, Seidel-Smith \cite{seidel2008biased} proved that any Liouville filling of $(S^{2n-1},\xi_{std})$ has vanishing symplectic homology, which provides evidence that the filling of $(S^{2n-1},\xi_{std})$ might be unique. In this note, we more generally consider asymptotically dynamically convex contact manifolds (Definition \ref{ADC}) and their topologically simple Liouville fillings (Definition \ref{topsimfilling}). We prove the following theorem in \S \ref{s3}.
\begin{theorem}\label{main}
	Let $(Y,\xi)$ be an asymptotically dynamically convex contact manifold of dimension $2n-1\ge 5$. Assume $Y$ has a Liouville filling $W$ that is topologically simple (i.e.\ $c_1(W)=0$ and $\pi_1(Y)\rightarrow \pi_1(W)$ is injective.) and has $SH_*(W;\mathbb{Z})=0$. Then $SH_*(W';\mathbb{Z})=0$ for every topologically simple Liouville filling $W'$.
\end{theorem}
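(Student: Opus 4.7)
\emph{Proof proposal.}
The plan is to compare Viterbo's long exact sequences for the two fillings $W$ and $W'$, leveraging the invariance of positive symplectic homology under change of topologically simple filling for ADC contact manifolds. This invariance should be the main technical ingredient established earlier in the paper.

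First, I would recall that for any Liouville filling $W$ of $Y$, Viterbo's long exact sequence reads
\[
\cdots \to H_{n+\ast}(W,\partial W;\Z) \to SH_\ast(W;\Z) \to SH^+_\ast(W;\Z) \xrightarrow{\delta_W} H_{n+\ast-1}(W,\partial W;\Z) \to \cdots,
\]
and the hypothesis $SH_\ast(W;\Z)=0$ is equivalent to the connecting map $\delta_W$ being an isomorphism in every degree. The analogous sequence for $W'$ shows that $SH_\ast(W';\Z)=0$ is equivalent to $\delta_{W'}$ being an isomorphism, so the task reduces to proving the latter.

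Second, I would invoke the invariance of positive symplectic homology: there is a canonical isomorphism $\Phi\colon SH^+_\ast(W;\Z)\xrightarrow{\cong}SH^+_\ast(W';\Z)$. Here $c_1=0$ pins down the $\Z$-grading on both sides, while the injectivity $\pi_1(Y)\hookrightarrow \pi_1(W), \pi_1(W')$ prevents spurious free homotopy classes of Reeb orbits and capping disks, so both $SH^+$ groups are computed from the same invariant action-filtered data on $(Y,\xi)$. I would then construct a chain-level transfer map $\Psi\colon H_{n+\ast-1}(W,\partial W;\Z)\to H_{n+\ast-1}(W',\partial W';\Z)$ from the interpolating Floer data realizing $\Phi$, and show the commutative square
\[
\delta_{W'}\circ \Phi \;=\; \Psi\circ \delta_W\colon\; SH^+_\ast(W;\Z)\longrightarrow H_{n+\ast-1}(W',\partial W';\Z).
\]
Granted commutativity, once $\Psi$ is shown to be an isomorphism in the range where $SH^+_\ast\neq 0$, the five-lemma immediately yields that $\delta_{W'}$ is an isomorphism, and hence $SH_\ast(W';\Z)=0$ as desired.

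The main obstacle is proving that $\Psi$ is an isomorphism. Because $\delta_W$ is already an iso by the vanishing hypothesis, we tautologically have $H_{n+\ast-1}(W,\partial W;\Z)\cong SH^+_\ast(Y)$ as an intrinsic invariant of $Y$; the content of the theorem is that the same identification must hold for $W'$. I would establish this via a chain-level analysis: the continuation cobordism realizing $\Phi$ should simultaneously induce a chain map on the Morse--Floer complexes computing $H_\ast(W,\partial W;\Z)$ and $H_\ast(W',\partial W';\Z)$, and the ADC action windows, together with the topologically simple hypotheses controlling gradings and homotopy classes, should force this chain map to be a quasi-isomorphism. The technical heart lies in simultaneously controlling compactness and transversality of the relevant moduli spaces across the action filtration, while keeping the comparison with the Viterbo coboundary strictly compatible with $\Phi$.
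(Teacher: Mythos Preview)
Your proposal has a genuine gap at the construction of $\Psi$. The isomorphism $\Phi\colon SH^+_*(W;\Z)\to SH^+_*(W';\Z)$ for ADC boundaries is \emph{not} realized by any continuation map or ``interpolating Floer data'': the completions $\widehat W$ and $\widehat{W'}$ are different symplectic manifolds, and there is in general no cobordism or embedding relating them. Lazarev's isomorphism is obtained by showing that, in each action window, the ADC index condition forces the positive Floer complex to depend only on Reeb data of $(Y,\xi)$; nothing in that argument produces a map between the Morse complexes of $W$ and $W'$. Moreover, the statement you are trying to establish about $\Psi$---that $H_*(W,\partial W;\Z)\cong H_*(W',\partial W';\Z)$, equivalently $H^*(W;\Z)\cong H^*(W';\Z)$---is in the paper a \emph{consequence} of the theorem (Corollary~\ref{cor}), not an input to it. So the proposed route is circular: you would need essentially the full strength of the result to build the map you want to feed into the five-lemma.

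The paper's argument avoids comparing the topology of $W$ and $W'$ altogether, using two ingredients absent from your outline: the unital ring structure on $SH_*$ and the boundary connected sum. One first works over a field $\bm k$. The long exact sequence for $W'$, together with $SH^+_{n+1}(W';\bm k)\cong SH^+_{n+1}(W;\bm k)\cong H^0(W;\bm k)\cong\bm k$, gives the dichotomy $SH_{n+1}(W';\bm k)\in\{0,\bm k\}$. Now $Y\#Y$ is again ADC, $W\natural W$ is again a topologically simple filling with vanishing $SH$, and $W'\natural W'$ is topologically simple; so the same dichotomy applies to $W'\natural W'$. But $SH_{n+1}(W'\natural W';\bm k)\cong SH_{n+1}(W';\bm k)^{\oplus 2}$, which rules out $\bm k$ and forces $SH_{n+1}(W';\bm k)=0$. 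Then $SH^+_{n+1}(W';\bm k)\to H^0(W';\bm k)$ is an injection $\bm k\to\bm k$, hence an isomorphism, so $H^0(W';\bm k)\to SH_n(W';\bm k)$ is zero; since this map sends $1$ to the unit of the ring $SH_*(W';\bm k)$, the ring vanishes. Finally, finite generation (inherited from $H^*(W)$ via $SH^+$) and the universal coefficient theorem upgrade the vanishing from all fields to $\Z$.
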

Note that all the Liouville fillings of $(S^{2n-1},\xi_{std})$ are diffeomorphic to $D^{2n}$ by \cite{mcduff1991symplectic}, hence all Liouville fillings of $(S^{2n-1},\xi_{std})$ are topologically simple. Since $(S^{2n-1},\xi_{std})$ is asymptotically dynamically convex  and $SH_*(D^{2n};\mathbb{Z})=0$ for the standard filling $(D^{2n},\omega_{std})$. Theorem \ref{main} reproves the Seidel-Smith theorem \cite{seidel2008biased}. 

Recently, Lazarev \cite{lazarev2016contact} showed that all flexible fillings of a contact manifold have the same integral cohomology. He also raised the following question.
\begin{problem}[{\cite[\S 8 Problem 1]{lazarev2016contact}}]\label{p1}
	If $(Y,\xi)$ has a flexible filling $W$, do all Liouville fillings of $(Y,\xi)$ have the same cohomology as $W$? More generally, do all Liouville fillings of an arbitrary asymptotically
	dynamically convex $(Y,\xi)$ have the same cohomology?
\end{problem}	
Using Theorem \ref{main},  we answer Question \ref{p1} partially in the following Corollary.

\begin{corollary}\label{sub}
	Assume the contact manifold $(Y,\xi)$ has a flexible Weinstein filling $W$ and $c_1(\xi)=0$. If $W'$ is a topological simple Liouville filling of $Y$, then $SH_*(W';\mathbb{Z})=0$ and $H^*(W;\mathbb{Z})\cong H^*(W';\mathbb{Z})$. 
\end{corollary}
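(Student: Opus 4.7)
The plan is to apply Theorem \ref{main} with the flexible Weinstein filling $W$ as the seed, and then upgrade vanishing of symplectic homology into the cohomology isomorphism via a Viterbo-type argument on the positive part of $SH$. First I would verify that $W$ is itself topologically simple. Since $W$ is Weinstein of dimension $2n \geq 6$, it admits a handle decomposition with handles of index at most $n$; dually, the pair $(W,Y)$ is $(n-1)$-connected, so the long exact sequences in homotopy and cohomology give isomorphisms $\pi_k(Y) \cong \pi_k(W)$ for $k \leq n-1$ and an injection $H^2(W;\mathbb{Z}) \hookrightarrow H^2(Y;\mathbb{Z})$. In particular $\pi_1(Y) \to \pi_1(W)$ is an isomorphism, and the injection combined with $c_1(TW)|_Y = c_1(\xi) = 0$ forces $c_1(W) = 0$. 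Flexibility of $W$ provides $SH_*(W;\mathbb{Z}) = 0$ by the Bourgeois--Ekholm--Eliashberg computation, and the boundary of a flexible Weinstein domain is asymptotically dynamically convex by Lazarev. Theorem \ref{main} then immediately yields $SH_*(W';\mathbb{Z}) = 0$.

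For the cohomology comparison, I would use the tautological long exact sequence relating $H^*(W,Y)$, $SH^*(W)$, and its positive part $SH^*_+(W)$. Vanishing of $SH^*$ on both $W$ and $W'$ produces degree-shifted isomorphisms between $SH^*_+$ and $H^*(W,Y)$ for each filling, so it suffices to establish $SH^*_+(W;\mathbb{Z}) \cong SH^*_+(W';\mathbb{Z})$. This invariance of positive symplectic cohomology for topologically simple fillings of an ADC contact manifold is precisely the mechanism that drives the proof of Theorem \ref{main}, applied at the level of the positive subcomplex rather than the full one. Given this isomorphism, Poincar\'e--Lefschetz duality promotes it to $H_*(W;\mathbb{Z}) \cong H_*(W';\mathbb{Z})$, and the Universal Coefficient Theorem produces $H^*(W;\mathbb{Z}) \cong H^*(W';\mathbb{Z})$.

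The main obstacle is the integral invariance of $SH^*_+$ for topologically simple fillings of an ADC contact manifold. Rational invariance follows from soft degree arguments available under asymptotic dynamical convexity, but over $\mathbb{Z}$ one has to carry the orientation and coherence data of the relevant truncation-and-continuation moduli through the positive part of the theory with integer coefficients. Once that refinement is secured, the remainder of the corollary is formal algebraic topology.
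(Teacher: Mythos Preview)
Your argument is correct and tracks the paper's proof closely: verify $W$ is topologically simple (Proposition~\ref{top}), invoke asymptotic dynamical convexity (Proposition~\ref{flexADC}) and $SH_*(W;\mathbb{Z})=0$ from Bourgeois--Ekholm--Eliashberg, apply Theorem~\ref{main}, and then read off the cohomology isomorphism from the exact triangle together with the filling-independence of $SH^+$.

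Two points are worth streamlining. First, the ``main obstacle'' you flag is not one: the integral invariance of $SH^+_*$ for topologically simple fillings of an ADC contact manifold is exactly Proposition~\ref{independent} (Lazarev's Proposition~3.8), stated and proved for an arbitrary coefficient ring $\bm{k}$, including $\mathbb{Z}$. No additional orientation or coherence analysis is needed here beyond what is already in the literature. Second, your route through $H^*(W,Y)$, then Poincar\'e--Lefschetz duality, then the Universal Coefficient Theorem is an unnecessary detour. In the convention of Proposition~\ref{triangle} the exact triangle already involves $H^{n-*}(W;\bm{k})$ directly, so once $SH_*(W;\mathbb{Z})=SH_*(W';\mathbb{Z})=0$ and $SH^+_*(W;\mathbb{Z})\cong SH^+_*(W';\mathbb{Z})$, the triangle immediately gives $H^{n+1-*}(W;\mathbb{Z})\cong H^{n+1-*}(W';\mathbb{Z})$; this is how the paper packages it as Corollary~\ref{cor}.
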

When $(Y,\xi)$ has a subcritical filling,  Corollary \ref{sub} shows that all the topological simple Liouville fillings have the same cohomology, which can also be derived from Barth-Geiges-Zehmisch's result \cite{barth2016diffeomorphism}. Barth-Geiges-Zehmisch's argument uses a foliation by holomorphic curves, while our method is algebraic and uses formal properties of symplectic homology. Moreover, our result is applicable to flexibly fillable contact manifold, which is not necessarily subcritically  fillable. The vanishing of the symplectic homology in Corollary \ref{sub} also provides evidence towards the following question.
\begin{problem}
	If $(Y,\xi)$ has a flexible filling $W$, is the Liouville filling of $Y$ unique?	
\end{problem}	

Corollary \ref{sub} also provides an obstruction to fillability by flexible Weinstein domains. We use this to prove that Brieskorn manifolds of dimension greater or equal to $5$ cannot be filled by flexible Weinstein domains. 

\section*{Acknowledgement}
I would like to thank my advisor Katrin Wehrheim for help with the expository, as well as continuous support and advice. I am also grateful to Oleg Lazarev for explaining his results and sharing his questions. This note is dedicated to the memory of Chenxue.

\section{Preliminaries on Fillings and Symplectic Homology}
\subsection{Symplectic fillings}
We assume throughout this note that $(Y,\xi)$ is a contact manifold of dimension $2n-1\ge 5$, such that $\xi$ is co-oriented and its first Chern class $c_1(\xi)=0$.
\begin{definition} \hspace{1em}
	\begin{itemize}
		\item $(W,\lambda)$ is a Liouville filling of $(Y,\xi)$ iff $\D \lambda$ is a symplectic form on $W$, the Liouville field $X_\lambda$, defined by $i_{X_\lambda} \D \lambda=\lambda$, is outward transverse along $\partial W$, and $(\partial W, \ker \lambda|_{\partial W})$ is contactomorphic to $(Y,\xi)$.  
		\item $(W,\lambda,\phi)$ is a Weinstein filling of $(Y,\xi)$ iff $(W,\lambda)$ is a Liouville filling, $\phi:W\rightarrow \mathbb{R}$ is a Morse function with maximal level set $\partial W$, and $X_\lambda$ is a gradient-like vector field for $\phi$.
		\item $(W,\lambda,\phi)$ is a flexible Weinstein filling of $(Y,\xi)$ iff $(W,\lambda,\phi)$ is a Weinstein filling, and there exist regular values  $c_1<\min \phi<c_2<\ldots<c_k<\max \phi$  of $\phi$, such that there are no critical values in $[c_k,\max \phi]$ and each $\phi^{-1}([c_i,c_{i+1}])$ is a Weinstein cobordism with a single critical point $p$ whose attaching sphere $\Lambda_p$ is either subcritical or a loose Legendrian in $(\phi^{-1}(c_i), \lambda|_{\phi^{-1}(c_i)})$, see \cite[Definition 11.29]{cieliebak2012stein} and \cite[Definition 2.4]{lazarev2016contact}
	\end{itemize}
	For simplicity, we only consider connected fillings in this note.
\end{definition}
For grading reasons in symplectic homology, we will only consider Liouville domains satisfying the following property.
\begin{definition}\label{topsimfilling}
	A Liouville domain $W$ is topologically simple iff  $c_1(W)=0$ and $\pi_1(\partial W)\rightarrow \pi_1(W)$ is injective.
\end{definition}
\begin{proposition}\label{top}
	Assume $(W, \lambda,\phi)$ is a Weinstein filling of $(Y,\xi)$ with $\dim Y \ge 5$ and $c_1(\xi)=0$. Then $W$ is topologically simple.
\end{proposition}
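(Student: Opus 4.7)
The proof naturally splits into two independent claims: showing $c_1(W) = 0$, and showing $\pi_1(\partial W) \to \pi_1(W)$ is injective. My plan is to reduce both claims to the structural fact that a Weinstein domain of dimension $2n$ has a handle decomposition with handles of index at most $n$; equivalently, $W$ deformation retracts onto an isotropic skeleton of (real) dimension at most $n$, so that $H_k(W;\mathbb{Z}) = 0$ for $k > n$.

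For the Chern class, I would first observe that along $Y = \partial W$ the tangent bundle of $W$ splits as complex vector bundles $TW|_Y \cong \xi \oplus \mathbb{C}$, where the trivial $\mathbb{C}$-summand is spanned by the Liouville direction $X_\lambda$ together with the Reeb direction. Consequently $c_1(W)|_Y = c_1(\xi) = 0$. To promote this to a global vanishing I would feed this into the long exact sequence of the pair
\[
H^2(W, Y; \mathbb{Z}) \to H^2(W; \mathbb{Z}) \to H^2(Y; \mathbb{Z}),
\]
and invoke Poincar\'e--Lefschetz duality $H^2(W, Y; \mathbb{Z}) \cong H_{2n-2}(W; \mathbb{Z})$. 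Since the hypothesis $\dim Y \geq 5$ gives $n \geq 3$ so that $2n - 2 > n$, this dual homology group vanishes by the skeleton remark above. Hence the restriction $H^2(W;\mathbb{Z}) \to H^2(Y;\mathbb{Z})$ is injective and $c_1(W) = 0$.

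For the $\pi_1$-injectivity, my plan is to use the dual handle decomposition. Reversing the Weinstein function $\phi$ exhibits $W$ as built from a collar $Y \times [0, \epsilon)$ by successively attaching handles whose indices are $2n - k$, where $k \leq n$ runs over the indices of critical points of $\phi$. Every such dual handle therefore has index at least $n \geq 3$. Since attaching a handle of index at least $3$ to a manifold does not affect $\pi_1$, the inclusion $\partial W \hookrightarrow W$ actually induces an isomorphism (hence in particular an injection) on fundamental groups.

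Neither step is intrinsically hard; the only place genuine care is needed is in tracking where the hypothesis $n \geq 3$ enters, namely to kill the duality obstruction $H_{2n-2}(W;\mathbb{Z})$ in the first step and to force the dual handles to have index at least $3$ in the second. Both arguments would break at $n = 2$, which is consistent with the well-known subtlety of Weinstein fillings in dimension four and explains the standing assumption $\dim Y \geq 5$ throughout the note.
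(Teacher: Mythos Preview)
Your proof is correct and follows the same strategy as the paper: both rely on the fact that $W$ is built from a collar of $Y$ by attaching handles of index at least $n \ge 3$. The paper packages the $\pi_1$ step via $\pi_2(W,Y)=0$ and the homotopy long exact sequence of the pair (obtaining only injectivity rather than your isomorphism), and outsources the $c_1$ step to \cite[Proposition~2.1]{lazarev2016contact}, whose content is exactly your restriction-plus-Lefschetz-duality argument.
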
	
\begin{proof}
	Since $\phi$ has only critical points with index less than or equal to $n=\frac{1}{2}\dim W$, the filling $W$ can be constructed from $Y$ by attaching handles with index greater than or equal to $n$. This implies that $\pi_2(W,Y)=0$, since $n\ge3$. Now  injectivity of $\pi_1(Y)\rightarrow \pi_1(W)$  follows from the long exact sequence
	$$\ldots \rightarrow \pi_2(W,Y)\rightarrow \pi_1(Y)\rightarrow\pi_1(W)\rightarrow \ldots.$$
	By \cite[Proposition 2.1]{lazarev2016contact}, $c_1(\xi)=0$ implies that $c_1(W)=0$.
\end{proof}	

Given two Liouville domains $W,W'$, we can join them by adding a 1-handle and then extend the Liouville structure to the handle. The resulting Liouville domain $W\natural W'$ is called the boundary connected sum of $W$ and $W'$. The contact boundary $\partial(W\natural W')$ is the contact connected sum $\partial W\# \partial W'$; see \cite[p.329]{cieliebak2012stein}.
\begin{proposition}\label{topsim}
	Let $W$ and $W'$ be two topologically simple Liouville domains. Then the boundary connected sum $W\natural W'$ is topologically simple.
\end{proposition}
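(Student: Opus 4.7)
The plan is to reduce both defining conditions of ``topologically simple'' to the corresponding properties of $W$ and $W'$ individually, by exploiting a convenient homotopy model of $W \natural W'$. Since the boundary connected sum is built by attaching a single 1-handle $D^1 \times D^{2n-1}$ to $W \sqcup W'$ along a pair of disks in $\partial W$ and $\partial W'$, the resulting domain deformation retracts onto $W \cup_\gamma W'$, where $\gamma$ is the core arc of the handle. Collapsing $\gamma$ to a point yields a homotopy equivalence $W \natural W' \simeq W \vee W'$, and under this equivalence the inclusion $\partial(W \natural W') \hookrightarrow W \natural W'$ corresponds, after identifying $\partial(W \natural W') = \partial W \,\#\, \partial W'$, to the map induced by the two inclusions $\partial W \hookrightarrow W$ and $\partial W' \hookrightarrow W'$.

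With this model in hand, the vanishing $c_1(W \natural W') = 0$ is immediate: the wedge decomposition gives a splitting $H^2(W \natural W'; \Z) \cong H^2(W; \Z) \oplus H^2(W'; \Z)$, under which $c_1(W \natural W')$ restricts to $c_1(W) = 0$ and $c_1(W') = 0$ respectively.

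For the injectivity of $\pi_1(\partial(W\natural W')) \to \pi_1(W \natural W')$, I would apply van Kampen on both sides. Since $\dim \partial W = 2n-1 \geq 5$, the gluing sphere $S^{2n-2}$ in the contact connected sum is simply connected and removing a small ball from either boundary component does not change $\pi_1$, so van Kampen gives $\pi_1(\partial W \,\#\, \partial W') \cong \pi_1(\partial W) * \pi_1(\partial W')$; the wedge model likewise gives $\pi_1(W \natural W') \cong \pi_1(W) * \pi_1(W')$. Chasing the inclusion through these identifications shows that the induced map is the free product of the two individual inclusion-induced maps.

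The only remaining step is the standard observation that a free product of injective homomorphisms is itself injective: by the normal form theorem, a nontrivial reduced word in $\pi_1(\partial W) * \pi_1(\partial W')$ is sent to a word of the same alternating pattern in $\pi_1(W) * \pi_1(W')$ whose letters, by hypothesis, remain nontrivial, hence the image is also a reduced nontrivial word. I do not expect a serious obstacle here; the only mild subtlety is coordinating basepoints so that the two van Kampen decompositions are compatible under the boundary inclusion, which is handled by placing all basepoints on the 1-handle itself.
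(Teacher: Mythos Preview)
Your proof is correct and follows essentially the same route as the paper: both arguments reduce $c_1(W\natural W')=0$ to the splitting $H^2(W\natural W')\cong H^2(W)\oplus H^2(W')$ coming from the $1$-handle attachment, and both obtain the $\pi_1$-injectivity by applying van Kampen on each side to identify the inclusion-induced map with the free product of the two injective maps. The only cosmetic differences are that you phrase the cohomology step via the wedge model $W\vee W'$ and spell out the normal-form justification for injectivity of a free product of injections, whereas the paper simply quotes these facts.
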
	
\begin{proof}
	Topologically, $W\natural W'$ is constructed by attaching a 1-handle to the disjoint union $W\coprod W'$. Let $\iota$ denote the inclusion $W\coprod W'\hookrightarrow W\natural W'$. Then $\iota^*: H^2(W\natural W';\mathbb{Z})\rightarrow H^2(W;\mathbb{Z})\oplus H^2(W';\mathbb{Z})$ is an isomorphism. Note that $c_1(W)=c_1(W')=0$ implies that $c_1(W\coprod W')=0$. Since $\iota^*c_1(W\natural W')=c_1(W\coprod W')$, we have $c_1(W\natural W')=0$.
	
	We have $\pi_1(W\natural W')=\pi_1(W)*\pi_1(W')$ and $\pi_1(\partial(W\natural W'))=\pi_1(\partial W\# \partial W')=\pi_1(\partial W)*\pi_1(\partial W')$ by the van Kampen theorem. Since $\pi_1(\partial W)\rightarrow \pi_1(W)$ and $\pi_1(\partial W')\rightarrow\pi_1(W')$ are both injective, the induced map on the free product of groups is also injective. Hence $W\natural W'$ is also topologically simple.
\end{proof}

The following definition is due to Lazarev  \cite[Definition 3.6]{lazarev2016contact}, and it generalizes the dynamically convex contact structures studied in \cite{abreu2014dynamical,cieliebak2015symplectic,hofer1999characterization}. A contact form $\alpha$ for the contact structure $\xi$ is called regular, if all Reeb orbits of $\alpha$ are non-degenerate. Let $\alpha_1,\alpha_2$ be contact forms for the same contact structure $\xi$. A partial order on the contact forms for a fixed co-oriented contact structure is given by $\alpha_1\ge \alpha_2$ iff $\alpha_1=f\alpha_2$ and $f\ge 1$;  see \cite[\S 3]{lazarev2016contact}.
\begin{definition}[{\cite[Definition 3.6]{lazarev2016contact}}]\label{ADC}
	A contact manifold $(Y^{2n-1},\xi)$ is called asymptotically dynamically convex, if there exist non-increasing regular contact forms $\alpha_1\ge\alpha_2\ldots $ for $\xi$ and a sequence of increasing  numbers $D_1<D_2<\ldots$ going to infinity such that every contractible Reeb orbit  $\gamma$ of $\alpha_k$ with period smaller than $D_k$ has the property that $\mu_{CZ}(\gamma)+n-3>0$, where $\mu_{CZ}(\gamma)$ is the Conley-Zehnder index of the non-degenerate orbit $\gamma$.
\end{definition}

An important fact is that the property of asymptotically dynamically convex is preserved under subcritical surgeries and flexible surgeries \cite[Theorem 3.15, 3.17, 3.18]{lazarev2016contact}.  In this note, we will use the following special case.

\begin{proposition}[{\cite[Theorem 3.15]{lazarev2016contact}}]\label{sum}
	Let $(Y_1,\xi_1)$, $(Y_2,\xi_2)$ be two asymptotically dynamically convex contact manifolds, then the contact connected sum $(Y_1\#Y_2,\xi_1\#\xi_2)$ is also asymptotically  dynamically convex. 
\end{proposition}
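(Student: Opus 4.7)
The plan is to realize the contact connected sum as a subcritical Weinstein handle attachment and then deduce the proposition from the invariance statement for subcritical surgery quoted immediately before it.

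First, I would observe that Definition \ref{ADC} makes sense verbatim for disconnected contact manifolds and that the disjoint union $(Y_1 \sqcup Y_2, \xi_1 \sqcup \xi_2)$ is itself asymptotically dynamically convex. Indeed, if $\alpha_k^{(i)}$ and $D_k^{(i)}$ witness the ADC property of $(Y_i,\xi_i)$ for $i=1,2$, then the forms $\alpha_k := \alpha_k^{(1)} \sqcup \alpha_k^{(2)}$ are non-increasing and regular, and with $D_k := \min\{D_k^{(1)}, D_k^{(2)}\}$ every (contractible) Reeb orbit of $\alpha_k$ of period below $D_k$ lies entirely within a single component. Its period and Conley--Zehnder index therefore coincide with those of the corresponding orbit of $\alpha_k^{(1)}$ or $\alpha_k^{(2)}$, so the bound $\mu_{CZ}(\gamma) + n - 3 > 0$ is inherited.

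Next, I would recall that the contact connected sum is obtained by attaching a single Weinstein $1$-handle to a Liouville cobordism whose concave boundary is $Y_1 \sqcup Y_2$. Since $\dim Y_i = 2n-1 \ge 5$ forces $n \ge 3$, the index $1$ is strictly less than $n$, so this handle is subcritical. Consequently $(Y_1 \# Y_2, \xi_1 \# \xi_2)$ is the result of a subcritical surgery on the ADC manifold $(Y_1 \sqcup Y_2, \xi_1 \sqcup \xi_2)$, and the invariance statement \cite[Theorem 3.15, 3.17]{lazarev2016contact} directly yields that $(Y_1 \# Y_2, \xi_1 \# \xi_2)$ is asymptotically dynamically convex.

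The substantive content is of course hidden in the subcritical-surgery invariance theorem, and that is where I expect the main difficulty. The task there is to construct contact forms on the surgered manifold whose low-period contractible Reeb orbits split into two classes: orbits inherited from $\alpha_k$ on $Y_1 \sqcup Y_2$, whose indices are controlled by the ADC hypothesis on the pieces, and new orbits localized near the attached handle, whose periods can be pushed past any preassigned threshold $D_k$ by shrinking the handle while keeping their Conley--Zehnder indices bounded below by an expression that satisfies the required inequality for $n \ge 3$. The standard tool here is the explicit Cieliebak and Bourgeois--Ekholm--Eliashberg model for the contact form near a subcritical Weinstein handle, which makes the local Reeb dynamics and Conley--Zehnder indices computable; the only point worth double-checking for our disconnected setup is that this localization argument works unchanged when the handle bridges two components simultaneously, which it does since the model only sees a neighborhood of the attaching region.
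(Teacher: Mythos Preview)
Your proposal is correct and matches the paper's approach: the paper simply cites this statement from \cite{lazarev2016contact} without proof, presenting it explicitly as a special case of the general invariance of the ADC property under subcritical surgery \cite[Theorems 3.15, 3.17, 3.18]{lazarev2016contact}, which is exactly the reduction you carry out. Your additional remarks on the disjoint union and on the handle model are accurate and fill in details the paper omits.
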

Yau \cite{yau2004cylindrical} showed that contact manifolds admitting subcritical Weinstein fillings are asymptotically dynamically convex. Lazarev generalized that to all the contact manifolds admitting flexible Weinstein fillings. 
\begin{proposition}[{\cite[Corollary 4.1]{lazarev2016contact}}]\label{flexADC}
	If $(Y,\xi)$ has a flexible Weinstein filling, then $(Y,\xi)$ is asymptotically dynamically convex. 
\end{proposition}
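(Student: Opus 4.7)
The plan is to induct on the handle decomposition of the flexible Weinstein filling $(W,\lambda,\phi)$. Order the critical values $c_1 < c_2 < \ldots < c_k$ of $\phi$ so that each cobordism $\phi^{-1}([c_i,c_{i+1}])$ contains a single critical point $p_i$, and set $(Y_i,\xi_i) := (\phi^{-1}(c_i+\varepsilon), \ker(\lambda|_{\phi^{-1}(c_i+\varepsilon)}))$ for sufficiently small $\varepsilon>0$. I would show inductively that each $(Y_i,\xi_i)$ is asymptotically dynamically convex; taking $i=k$ then gives the conclusion, since $(Y_k,\xi_k)$ is contactomorphic to $(Y,\xi)$.

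For the base case, below the first critical level the sublevel set is a disjoint union of standard $2n$-balls (the $0$-handles around the minima of $\phi$), and its boundary is a disjoint union of standard contact spheres $(S^{2n-1},\xi_{std})$. Each standard sphere is ADC: after a small non-degenerate perturbation of the round contact form, all contractible Reeb orbits of bounded period have Conley-Zehnder index at least $n-1$, hence satisfy $\mu_{CZ}+n-3>0$ for $n\ge 3$. The subsequent index-$1$ handles that join the various $0$-handles produce contact connected sums, and Proposition \ref{sum} shows that iterated contact connected sums of ADC manifolds remain ADC.

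For the inductive step, assume $(Y_i,\xi_i)$ is ADC and consider the handle at $p_{i+1}$. By the definition of a flexible Weinstein filling, the attaching sphere $\Lambda_{p_{i+1}}$ is either subcritical or a loose Legendrian in $(Y_i,\xi_i)$. In either case the resulting surgery preserves ADC by the cited preservation results \cite[Theorems 3.15, 3.17, 3.18]{lazarev2016contact}, so $(Y_{i+1},\xi_{i+1})$ is ADC, advancing the induction.

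The main obstacle is the preservation of ADC under the critical handle attachment along a loose Legendrian (Lazarev's Theorem 3.17). The idea is to invoke Murphy's h-principle to isotope $\Lambda_{p_{i+1}}$ into an arbitrarily small Darboux chart and then arrange the model surgery contact form so that every short new contractible Reeb orbit produced by the handle has $\mu_{CZ}+n-3>0$, while the short contractible orbits of $(Y_i,\xi_i)$ survive the surgery and are controlled by the inductive hypothesis. These index estimates are the technical heart of Lazarev's preservation theorems, which we would invoke as a black box in order to reduce Proposition \ref{flexADC} to the base case on the standard sphere.
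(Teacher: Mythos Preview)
Your outline is correct and is essentially Lazarev's original argument: induct on the handle decomposition, starting from the ADC standard sphere and applying the preservation results \cite[Theorems 3.15, 3.17, 3.18]{lazarev2016contact} at each subcritical or loose critical handle. Note, however, that the present paper does not give its own proof of Proposition~\ref{flexADC}; it simply quotes the result from \cite[Corollary 4.1]{lazarev2016contact} as a black box. So there is nothing to compare on the paper's side --- your proposal reconstructs the proof from the cited source rather than differing from anything in this paper.

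One small point: you say you would ``invoke as a black box'' Lazarev's Theorem 3.17 for the loose critical handle. That is fine and honest, but be aware that this theorem is precisely the nontrivial content of Proposition~\ref{flexADC}; once you grant it (and the easier subcritical preservation), the induction is formal. So your write-up is really a reduction of the proposition to \cite[Theorem 3.17]{lazarev2016contact}, not an independent proof.
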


\subsection{Symplectic homology and positive symplectic homology}
To a Liouville filling $(W,\lambda)$ of the contact manifold $(Y,\xi)$, one can associate the completion $(\widehat{W}, \D \widehat{\lambda})=(W\cup_{Y}[1,\infty)_r \times Y, d\widehat{\lambda})$, where $\widehat{\lambda}=\lambda$ on $W$ and $\widehat{\lambda}=r(\lambda|_{Y})$ on $[1,\infty)_r\times Y$. For any coefficient ring or field $\bm{k}$, the symplectic homology $SH_*(W;\bm{k})$ is defined to be the Hamiltonian Floer homology on $(\widehat{W}, \D \widehat{\lambda})$ for the quadratic Hamiltonian $H$, where $H=r^2$ on the cylindrical end and $C^2$ small on $W$.  The chain complex is generated by the \textit{contractible} Hamiltonian periodic orbits, after a $C^2$ small perturbation of $H$. There are two types of generators: (1) critical points in $W$, (2) periodic orbits in $[0,\infty)\times Y$. One can choose the perturbation carefully such that there is a one-to-one correspondence between Reeb orbits  on $Y$ for the contact form $\lambda|_Y$ and pairs of Hamiltonian periodic orbits on $[0,\infty)\times Y$. The differential arises from counting the solutions to the Floer equations; see \cite{cieliebak2002handle,seidel2008biased,lazarev2016contact} for details of the construction. The complex generated by the critical points in $W$ is a subcomplex and the positive symplectic homology $SH^+_*(W;\bm{k})$ is  defined to be the homology of the quotient complex; see \cite{cieliebak2015symplectic}.  Moreover, we have the following exact triangle.
\begin{proposition}[\cite{seidel2008biased}]\label{triangle}
	For a Liouville domain $W$ with $c_1(W)=0$, there is an exact triangle:
	$$
	\xymatrix{H^{n-*}(W;\bm{k})\ar[rr] & &SH_*(W;\bm{k})\ar[ld]\\
		& \ar[lu]_{[-1]} SH^+_*(W;\bm{k}) &		
	}
	$$
\end{proposition}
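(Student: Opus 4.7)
The plan is to realize the triangle as the long exact sequence in homology coming from a short exact sequence of chain complexes
$$0 \longrightarrow C^{\mathrm{low}}_*(W;\bm{k}) \longrightarrow SC_*(W;\bm{k}) \longrightarrow SC^+_*(W;\bm{k}) \longrightarrow 0,$$
where $C^{\mathrm{low}}_*$ is generated by the type (1) orbits (critical points in $W$ of the small Morse perturbation of $H$) and the quotient $SC^+_*$ is generated by the type (2) orbits, so by definition computes $SH^+_*(W;\bm{k})$.

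First I would verify that $C^{\mathrm{low}}_*$ is a subcomplex. A direct computation using $H=r^2$ on the cylindrical end and $H$ $C^2$-small on $W$ shows that type (1) orbits have action in a small neighborhood of $0$, while every type (2) orbit concentrated near a level $\{r\}\times Y$ has action $\approx r(T-r)>0$, where $T$ is the period of the underlying Reeb orbit and $r$ the radial level on which the orbit sits (chosen so that $h'(r)=T$ for the radial Hamiltonian). Since the Floer differential strictly decreases the action functional, no differential can map a type (1) generator to a type (2) generator, so $C^{\mathrm{low}}_*$ is closed under the differential.

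Second I would identify $H_*(C^{\mathrm{low}}_*)$ with $H^{n-*}(W;\bm{k})$. Take the $W$-perturbation to be a time-independent $C^2$-small Morse function $f$ whose gradient is transverse to $\partial W$ with the boundary convention that makes the Morse cohomology of $f$ compute $H^*(W;\bm{k})$. A classical comparison argument (Floer; Salamon-Zehnder) shows that for $f$ sufficiently $C^2$-small every finite-energy Floer trajectory between critical points of $f$ is time-independent and coincides with a negative gradient trajectory of $f$, so the Floer differential on $C^{\mathrm{low}}_*$ reduces to the Morse differential. The hypothesis $c_1(W)=0$ makes the Conley-Zehnder index well-defined on contractible loops via a homotopy class of trivialisations of $TW$, and for a constant orbit at a critical point $p$ one computes $\mu_{CZ}(p) = n - \ind_{\mathrm{Morse}}(p)$. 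Reindexing the Morse cochain complex by this Conley-Zehnder grading therefore converts $H^*(W;\bm{k})$ into $H^{n-*}(W;\bm{k})$.

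The long exact sequence of the above short exact sequence then produces the desired triangle, with the connecting homomorphism $SH^+_*(W;\bm{k})\to H^{n-*}(W;\bm{k})$ of degree $-1$. The main technical point requiring care is the grading identification in the second step: one must keep the sign of the Hamiltonian perturbation, the boundary condition on $f$ (so that its Morse cohomology is $H^*(W;\bm{k})$ rather than $H^*(W,\partial W;\bm{k})$), and the Conley-Zehnder normalisation mutually consistent, exactly as is fixed in the references cited in the statement.
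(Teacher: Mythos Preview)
The paper does not give its own proof of this proposition; it is recorded as a preliminary fact cited from \cite{seidel2008biased} (see also \cite{cieliebak2015symplectic}), with the surrounding text only noting that the critical points in $W$ generate a subcomplex whose quotient defines $SH^+_*$. Your sketch is precisely the standard argument underlying that citation---action filtration to isolate the constant-orbit subcomplex, reduction of $C^2$-small Floer theory to Morse theory with the shift $\mu_{CZ}=n-\ind_{\mathrm{Morse}}$ (this is where $c_1(W)=0$ enters), and the long exact sequence of the resulting short exact sequence of complexes---so there is nothing to compare: your approach is the one implicitly invoked by the paper.
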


Since there is a correspondence between the chain complex for $SH^+_*(W;\bm{k})$ and Reeb orbits, one may hope that $SH^+_*(W;\bm{k})$ is an invariant of the contact boundary. This is not true in general,  but if $Y$ is asymptotically dynamically convex, then the positive symplectic homology $SH^+_*(W,\bm{k})$ is independent of the choice of topologically simple Liouville filling $W$, hence a contact invariant.
\begin{proposition}[{\cite[Proposition 3.8]{lazarev2016contact}}]\label{independent}
	Let $(Y,\xi)$ be an asymptotically dynamically convex contact manifold. If there are two topologically simple Liouville fillings $W_1,W_2$, then $SH^+_*(W_1;\bm{k})\cong SH^+_*(W_2;\bm{k})$. 
\end{proposition}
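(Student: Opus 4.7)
The plan is to express $SH^+_*(W;\bm{k})$ as a direct limit of action-truncated groups computed using the ADC sequence, and then to argue that each truncation piece depends only on $(Y,\xi)$, not on the filling. Fix the data $(\alpha_k,D_k)$ from Definition \ref{ADC}. For each $k$, I would choose an admissible Hamiltonian on $\widehat W$ that is $C^2$-small on $W$ and of slope slightly less than $D_k$ at infinity, with contact form $\alpha_k$ at $\{r=1\}$. The corresponding action-truncated positive Floer complex $CF^{+,<D_k}_*$ is generated by contractible Hamiltonian $1$-periodic orbits in the cylindrical end, which correspond to contractible Reeb orbits of $\alpha_k$ of period less than $D_k$. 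Monotone continuation maps over $k$ give $SH^+_*(W;\bm{k})=\varinjlim_k SH^{+,<D_k}_*(W;\bm{k})$.

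Using the topological simplicity of $W$, I would next argue that the underlying graded $\bm{k}$-module of this chain complex is already intrinsic to $(Y,\xi)$: injectivity of $\pi_1(Y)\rightarrow \pi_1(W)$ implies that a Reeb orbit on $Y$ is contractible in $W$ if and only if it is contractible in $Y$, and $c_1(W)=0$ makes the Conley--Zehnder index of a contractible orbit independent of the capping disk, so it can be computed intrinsically in $Y$. Combined with the ADC condition, every generator $\gamma$ satisfies $\mu_{CZ}(\gamma)+n-3>0$, hence has strictly positive $SH^+$-degree.

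The main step is to show the Floer differential is likewise intrinsic to $Y$. For this I would stretch the neck along a copy of $Y$ lying deep in the cylindrical end. In the SFT limit, any Floer cylinder contributing to the differential breaks into a holomorphic building whose top levels live in the symplectization $\mathbb{R}\times Y$ and whose lowest levels (if any) live in $\widehat W$ and carry positive Reeb asymptotes of period $<D_k$. A Fredholm index computation for such a component in $\widehat W$, using $c_1(W)=0$ and the ADC inequality $\mu_{CZ}(\gamma_i)+n-3>0$ on each asymptote, shows that its virtual dimension is strictly negative, so for generic almost complex structure no such component exists. Thus every rigid Floer cylinder lies entirely in the symplectization and is determined by $(Y,\alpha_k)$. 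The same neck-stretching argument applies to the continuation maps between consecutive $k$, producing a canonical isomorphism of the direct system with one built entirely from $Y$-data.

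The hard part is the index argument ruling out the $\widehat W$-components: one must verify that the ADC inequality, which involves the shift by $n-3$, is exactly enough to kill the relevant virtual dimensions, and one must handle multi-level buildings carefully (a cascade of cylinders in $\mathbb{R}\times Y$ capped off by a low-level curve in $\widehat W$). This is a delicate bookkeeping of SFT grading conventions and transversality, but it follows the strategy of Cieliebak--Oancea and Lazarev \cite{lazarev2016contact}.
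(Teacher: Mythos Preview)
The paper does not give its own proof of this proposition; it is simply quoted from \cite[Proposition 3.8]{lazarev2016contact}. Your outline follows Lazarev's strategy rather closely: realize $SH^+_*$ as $\varinjlim_k SH^{+,<D_k}_*$ over the ADC data $(\alpha_k,D_k)$, use topological simplicity so that the set of contractible Reeb generators and their Conley--Zehnder gradings are intrinsic to $(Y,\xi)$, and stretch the neck to argue that the differential and continuation maps see only symplectization data. So at the level of strategy there is nothing to distinguish your approach from what the paper cites.

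There is one genuine inaccuracy in your index step. You assert that a holomorphic component in $\widehat W$ with positive Reeb asymptotes of period $<D_k$ has \emph{strictly negative} virtual dimension under ADC, hence does not exist generically. For a plane capping a single orbit $\gamma$ the Fredholm index is $(n-3)+\mu_{CZ}(\gamma)$, and the ADC inequality says exactly that this quantity is \emph{strictly positive}, not negative. Such planes can and do exist; what ADC prevents is the existence of \emph{rigid} (index~$0$) planes. The correct argument is therefore the opposite of what you wrote: the filling enters the stretched differential only through an augmentation counting index-$0$ planes (equivalently, in an index-$1$ building any $\widehat W$ level already has index $\ge 1$, leaving $\le 0$ for the remaining translation-invariant symplectization levels, which is impossible). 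The ADC bound rules out precisely the index-$0$ case, so the augmentation is trivial and the differential depends only on $Y$. Your conclusion is right, but you should reverse the sign of the index and the logical mechanism accordingly.
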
	

For the proof of theorem \ref{main}, we also need to recall the ring structure and boundary connected sum formula on symplectic homology. 
\begin{proposition}[{\cite[Theorem 6.6]{ritter2013topological}}]\label{ring}
	The pair of pants product $SH_*(W;\bm{k})\otimes SH_\star(W;\bm{k})\rightarrow SH_{*+\star-n}(W;\bm{k})$ makes $SH_*(W;\bm{k})$ into a unital ring, with the unit in $SH_n(W;\bm{k})$.   The morphism $H^{n-*}(W;\bm{k})\rightarrow SH_*(W;\bm{k})$  in Proposition \ref{triangle} is a unital ring morphism, where the multiplication on $H^{n-*}(W;\bm{k})$ is the usual cup product. 
\end{proposition}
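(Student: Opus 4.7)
The plan is to follow the standard TQFT-style construction of Ritter \cite{ritter2013topological}. The pair-of-pants product is defined by counting Floer trajectories on the thrice-punctured sphere $\Sigma = S^2 \setminus \{p_+, p_-^1, p_-^2\}$ equipped with cylindrical ends and a domain-dependent Hamiltonian $H_\Sigma$ whose asymptotic slopes at the two negative punctures sum to the slope at the positive puncture. This compatibility is essential for the action estimate and maximum principle that ensure $C^0$-compactness of the moduli space when $H$ grows quadratically at infinity. The degree shift $\ast+\star-n$ is dictated by the index theorem for the linearized Cauchy--Riemann operator on a pair of pants, and a standard gluing--cobordism argument shows that the chain-level operation descends to a well-defined product on homology.

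Once the product is defined, graded commutativity follows from rotating the two inputs via a one-parameter family of conformal structures on $\Sigma$, and associativity follows from a cobordism between the two boundary strata of the Deligne--Mumford space of 4-punctured spheres. To construct the unit I would use a PSS-type map: counting Floer solutions on the disc with one positive cylindrical end, with an interior marked point constrained to lie on a representative of the Poincar\'e dual of a cohomology class, defines the morphism $H^{n-*}(W;\bm{k}) \to SH_*(W;\bm{k})$ of Proposition \ref{triangle}. The image of $1\in H^0(W;\bm{k})$ lies in $SH_n(W;\bm{k})$, and the unit axiom follows by capping off one input of the pair of pants and comparing with the identity continuation cylinder.

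The main content, and the subtlest step, is showing that the PSS map intertwines cup product with the pair-of-pants product. The argument is a mixed Floer--Morse cobordism: one defines an auxiliary operation on the pair of pants by replacing the two negative cylindrical ends with half-planes carrying Morse gradient half-trajectories, and studies the two boundary strata of a one-parameter family of conformal structures on this domain. One stratum recovers the pair-of-pants product of two PSS classes, the other recovers the PSS image of the cup product, and the resulting chain homotopy gives the ring-map property. The main obstacles are transversality for the mixed Floer--Morse moduli spaces and preserving $C^0$-compactness through every degeneration; both are handled by domain-dependent almost complex structures and Hamiltonian perturbations together with the exactness of $W$, as carried out in \cite{ritter2013topological}.
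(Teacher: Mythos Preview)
The paper does not prove this proposition; it simply cites \cite[Theorem 6.6]{ritter2013topological} and moves on. Your proposal is a reasonable outline of Ritter's construction, so there is nothing to compare against here---the paper's ``proof'' is the citation itself, and your sketch is consistent with the argument in the cited reference.
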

\begin{remark}
	Our grading convention of $SH_*(W;\bm{k})$ follows the convention in \cite{bourgeois2012effect,cieliebak2015symplectic,lazarev2016contact}. \cite[Theorem 6.6]{ritter2013topological} was stated for the symplectic cohomology $SH^*(W;\bm{k})$ and it is related to the symplectic homology above by $SH^*(W;\bm{k})=SH_{n-*}(W;\bm{k})$. Also note that the grading conventions for symplectic cohomology in \cite{ritter2013topological} and \cite{seidel2008biased} are differed by a shift of $n$.
\end{remark}

\begin{proposition}[{\cite[Theorem 9.5]{ritter2013topological}}]\label{transfer}
	Let $\iota: (W',\lambda')\hookrightarrow (W,\lambda)$ be a Liouville subdomain \cite[\S 9]{ritter2013topological}, then the Viterbo transfer map $\iota^*_{SH}:SH_*(W;\bm{k}) \to SH_*(W';\bm{k})$ is a unital ring map.
\end{proposition}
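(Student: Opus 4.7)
The plan is to realize $\iota^*_{SH}$ at the chain level via ``step-shaped'' Hamiltonians on $\widehat{W}$ and then verify the unit and multiplicativity axioms separately by action-filtration arguments combined with integrated maximum principles on the Floer equations.

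For the construction, I would use a family of admissible Hamiltonians $H_\tau$ on $\widehat{W}$ of the following shape: $C^2$-small on $W'$, linear with slope $\tau$ on a collar $[1, 1+\delta] \times \partial W'$ sitting inside $W$, essentially constant on the intermediate region $W \setminus (W' \cup \text{collar})$, and quadratic (or linear of matching slope) on the cylindrical end of $\widehat{W}$. The 1-periodic orbits split into ``inner'' orbits (critical points in $W'$ together with orbits along $\partial W'$) and ``outer'' orbits (critical points in the intermediate region together with orbits along $\partial W$). An action computation shows that these two classes are separated by a well-defined action cutoff, so the Floer complex carries an inner/outer filtration. In the direct limit $\tau \to \infty$, the outer piece computes $SH_*(W;\bm{k})$ via cofinal comparison with standard Hamiltonians on $\widehat{W}$, while the inner piece computes $SH_*(W';\bm{k})$. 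The associated graded morphism yields $\iota^*_{SH}$ on homology; compare \cite[\S 9]{ritter2013topological}.

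For the unit, I would use Proposition \ref{ring}: the unit $1 \in SH_n(W;\bm{k})$ is the image of $1 \in H^0(W;\bm{k})$ under the PSS-type map from cohomology, and at the chain level this class is represented by the minimum of a Morse function supported on the skeleton, which can be arranged to lie in $W'$. Under the filtration above, such a minimum is an inner generator and projects to the class representing $1 \in SH_n(W';\bm{k})$. For multiplicativity, I would set up the pair-of-pants product at the chain level using the same step-shaped Hamiltonian near each of the three cylindrical ends of the pair of pants $\Sigma = S^2 \setminus \{3 \text{ points}\}$. The crucial step is to choose the interpolating sub-closed 1-form on $\Sigma$ so that an integrated maximum principle applies along the collar $[1, 1+\delta] \times \partial W'$: if both positive inputs are inner orbits, then no Floer solution on $\Sigma$ can escape the inner region, so the output at the negative puncture is also inner. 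This shows that the pair-of-pants operation respects the action filtration, hence descends through the associated graded and intertwines the products on the inner and outer pieces, which is exactly multiplicativity of $\iota^*_{SH}$.

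The main obstacle will be the integrated maximum principle in the pair-of-pants setup: one must construct the sub-closed 1-form on $\Sigma$ carefully enough that the convex-slope argument of Abouzaid--Seidel applies uniformly across all three cylindrical ends, while still having enough geometric room on $\widehat{W}$ to keep the two slope regions well separated. Once this technical input is in place, combining it with the chain-level description of the unit gives the claim that $\iota^*_{SH}$ is a unital ring map.
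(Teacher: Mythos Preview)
The paper does not prove this proposition; it is quoted verbatim from \cite[Theorem 9.5]{ritter2013topological} and used as a black box. So there is no ``paper's own proof'' to compare against.

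That said, your sketch is essentially the strategy of the cited reference: step-shaped Hamiltonians, action separation into inner/outer generators, and an integrated maximum principle on the pair-of-pants to show the product respects the filtration. One small point to be careful with: your description of the unit as ``the minimum of a Morse function \ldots which can be arranged to lie in $W'$'' is slightly off in spirit. The cleaner argument (and the one Ritter uses) is that the Viterbo map fits into a commutative square with the PSS maps $H^{n-*}(W)\to SH_*(W)$ and $H^{n-*}(W')\to SH_*(W')$ and the ordinary restriction $H^*(W)\to H^*(W')$; since the latter is unital and the PSS maps send $1$ to $1$ by Proposition~\ref{ring}, unitality of $\iota^*_{SH}$ follows without needing to locate a specific critical point inside $W'$. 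Your direct argument can also be made to work, but you would have to justify that the relevant generator survives the action filtration and is not killed by outer contributions, which is more delicate than the commutative-square route.
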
	

\begin{proposition}[{\cite[Theorem 1.11]{cieliebak2002handle}, \cite[Theorem 2]{fauck2017cieliebak}}]\label{boundarysum}
	Let $W,W'$ be two Liouville domains with $c_1(W)=c_1(W')=0$. Let $\iota:W\coprod W'\rightarrow W\natural W'$ denote the inclusion. Then the Viterbo transfer map $\iota_{SH}^*: SH_\star(W\natural W';\bm{k})\rightarrow SH_\star(W;\bm{k})\oplus SH_\star(W';\bm{k})$ arising from the inclusion $\iota$ is an isomorphism. 
\end{proposition}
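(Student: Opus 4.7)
The plan is to identify $W \natural W'$ with the result of attaching a single index-$1$ Weinstein handle to $W \coprod W'$, and to invoke Cieliebak's subcritical handle attachment theorem. Since $n = \tfrac{1}{2}\dim W \geq 3$, an index-$1$ handle is subcritical, so the theorem promises that the Viterbo transfer map associated to such an attachment is an isomorphism on symplectic homology.

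First I would establish a disjoint union formula $SH_*(V_1 \coprod V_2; \bm{k}) \cong SH_*(V_1; \bm{k}) \oplus SH_*(V_2; \bm{k})$. For a Hamiltonian on the completion $\widehat{V_1 \coprod V_2}$ that restricts to an admissible Hamiltonian on each completed component, every $1$-periodic orbit must lie entirely in one component, and Floer cylinders between such orbits remain in that same component, since their domain is connected and the Hamiltonian vector field is tangent to each piece. This forces the Floer chain complex to split as a direct sum, compatibly with the Viterbo transfer to each factor $V_i$. Combined with Cieliebak's handle attachment theorem applied to the Weinstein cobordism from $V = W \coprod W'$ to $V' = W \natural W'$, this would give the claimed isomorphism, with the direct-sum transfer map realizing it.

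The main technical obstacle is that Cieliebak's original theorem is stated for connected Liouville domains, so its proof needs to be extended to the disconnected base $V = W \coprod W'$. The essential ingredient is an action-filtration argument. After choosing the model handle sufficiently narrow, Reeb orbits on $\partial(W \natural W')$ of action below a threshold $D$ are in bijection, up to cancelling Morse--Bott pairs of arbitrarily short orbits concentrated near the isotropic attaching sphere $S^0$, with Reeb orbits on $\partial W \sqcup \partial W'$ of action below $D$. One must further verify that Floer trajectories between low-action orbits cannot enter the handle region, because doing so would force an energy cost exceeding $D$; this is the step where subcriticality of the $1$-handle is used, via a monotonicity or maximum principle argument on the slab surrounding the coisotropic core. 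Passing to the direct limit over $D$ then gives the isomorphism asserted in the proposition.
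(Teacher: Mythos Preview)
The paper does not give its own proof of this proposition; it is quoted as a known result from the cited references (Cieliebak's subcritical handle attachment paper and Fauck's later correction/completion of that argument). Your outline is essentially the strategy of those references: realize $W\natural W'$ as a single index-$1$ Weinstein handle attachment to $W\coprod W'$, observe that symplectic homology of a disjoint union splits as a direct sum compatibly with the transfer map, and then invoke invariance of symplectic homology under subcritical handle attachment. You also correctly flag the point that Cieliebak's original statement is formulated for a connected base, which is exactly the sort of technical issue Fauck's paper \cite{fauck2017cieliebak} is written to address.

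Two small corrections at the level of the sketch. First, an index-$1$ handle is already subcritical once $n\ge 2$; your assumption $n\ge 3$ is stronger than needed, though it matches the paper's standing hypothesis. Second, the mechanism in the cited proofs is not really that Floer cylinders are \emph{forbidden} from entering the handle by an energy cost exceeding $D$. Rather, one arranges the handle geometry so that the new Reeb orbits created near the attaching $S^0$ either have action above the threshold $D$ or come in algebraically cancelling pairs; hence the filtered positive symplectic homology below $D$ is unchanged, and passing to the direct limit gives the isomorphism. Your final sentence conflates this with a confinement argument for trajectories, which is not the actual ingredient. At the level of an outline this is a minor misstatement, but if you were to write out a full proof you would need to follow the action-filtration/cancellation route of \cite{cieliebak2002handle,fauck2017cieliebak} rather than a trajectory-exclusion argument.
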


\section{Proof of Theorem \ref{main}}\label{s3}
The proof of Theorem \ref{main} has some similarities with Smith's argument in \cite{seidel2008biased}, as both arguments use boundary connected sum. Smith's argument relies on Eliashberg-Floer-McDuff's result and only works for $(S^{2n-1},\xi_{std})$. Our argument uses the ring structure on symplectic homology.
\begin{proof}[Proof of Theorem \ref{main}]
	We will first show $SH_*(W';\bm{k})=0$ for any field $\bm{k}$. Note that the underlying chain complex defining $SH_*(W;\mathbb{Z})$ is a free abelian group generated by Hamiltonian periodic orbits. By universal coefficient theorem \cite[Theorem 3A.3]{hatcher2002algebraic},  $SH_*(W;\mathbb{Z})$ and $SH_*(W;\bm{k})$ are related by the following short exact sequence:
	\begin{equation}\label{universal}
	0\rightarrow SH_*(W;\mathbb{Z})\otimes_{\mathbb{Z}}\bm{k}\rightarrow SH_*(W;\bm{k}) \rightarrow \Tor\left(SH_{*-1}(W;\mathbb{Z}), \bm{k}\right)\rightarrow 0.
	\end{equation}		
	Hence $SH_*(W;\mathbb{Z})=0$ implies that $SH_*(W;\bm{k})=0$ for any field $\bm{k}$. Then Proposition \ref{triangle} yields an isomorphism
	\begin{equation}\label{SH+}
	SH^+_*(W;\bm{k})\cong H^{n+1-*}(W;\bm{k}).
	\end{equation}
	Since $(Y,\xi)$ is asymptotically  dynamically convex and both $W,W'$ are topologically simple, Proposition \ref{independent} and \eqref{SH+} yield an isomorphism
	\begin{equation}\label{SH'}
	SH^+_*(W';\bm{k})\cong SH_*^+(W;\bm{k})\cong H^{n+1-*}(W;\bm{k}).
	\end{equation}
	Applying Proposition \ref{triangle} to $W'$, we have the following long exact sequence:
	\begin{equation}\label{exact}
	\ldots \rightarrow H^{-1}(W';\bm{k})\rightarrow SH_{n+1}(W';\bm{k})\rightarrow SH^+_{n+1}(W';\bm{k})\rightarrow H^0(W';\bm{k})\rightarrow SH_n(W';\bm{k})\rightarrow \ldots,
	\end{equation}
	therefore $SH_{n+1}(W';\bm{k})\cong\ker\left(SH_{n+1}^+(W';\bm{k})\rightarrow H^0(W';\bm{k})\right)$.   By  \eqref{SH'},  $SH_{n+1}^+(W';\bm{k})$ is isomorphic to $H^0(W;\bm{k})\cong  \bm{k}$. Since $\bm{k}$ is a field, there are only two possibilities for $SH_{n+1}(W';\bm{k})$,
	\begin{equation}\label{SH} 
	SH_{n+1}(W';\bm{k})\cong 0 \text{ or } \bm{k}. 
	\end{equation}
	Next, we will rule out the case of $SH_{n+1}(W';\bm{k})=\bm{k}$. By Proposition \ref{sum},  the contact connected sum $Y\#Y$ is also asymptotically dynamically convex. By Proposition \ref{topsim}, both $W\natural W$ and $W'\natural W'$ are topologically simple Liouville fillings of $Y\# Y$. By Proposition \ref{boundarysum}, we have an isomorphism $SH_*(W\natural W;\mathbb{Z})\cong SH_*(W;\mathbb{Z})\oplus SH_*(W;\mathbb{Z})=0$.  Therefore the conditions in Theorem \ref{main} also hold for $Y\#Y$.  Hence the same argument for \eqref{SH} can be applied to $W'\natural W'$, and we conclude that $$ SH_{n+1}(W'\natural W';\bm{k})\cong0 \text{ or } \bm{k}.$$
	On the other hand, by Proposition \ref{boundarysum}, $SH_{n+1}(W'\natural W';\bm{k})$ is isomorphic to $SH_{n+1}(W';\bm{k})\oplus SH_{n+1}(W';\bm{k})$. Therefore the only possibility is 
	$$SH_{n+1}(W';\bm{k})=0.$$
	Consequentially, by the long exact sequence \eqref{exact}, $SH_{n+1}^+(W';\bm{k})\rightarrow H^0(W';\bm{k})$ is an injective map from $\bm{k}$ to $\bm{k}$. Since $\bm{k}$ is a field, we can conclude $SH_{n+1}^+(W';\bm{k})\rightarrow H^0(W';\bm{k})$ is an isomorphism. This is the reason why we want to work with field $\bm{k}$ instead of $\mathbb{Z}$. Then $H^0(W';\bm{k})\rightarrow SH_n(W';\bm{k})$ is zero by \eqref{exact}. By Proposition \ref{ring}, the morphism $H^0(W';\bm{k})\rightarrow SH_n(W';\bm{k})$ sends the unit in $H^0(W';\bm{k})$ to the unit of $SH_*(W';\bm{k})$, therefore the unit of $SH_*(W';\bm{k})$ is zero. This proves $SH_*(W'; \bm{k})=0$ for any field $\bm{k}$.
	
	Now assume that the integral symplectic homology $SH_*(W';\mathbb{Z})$ does not vanish. Since $H^{n-*}(W;\mathbb{Z})$ is a finitely generated abelian group, $SH_*^+(W;\mathbb{Z})=SH_*^+(W';\mathbb{Z})$ is also finitely generated by Proposition \ref{triangle}.  Proposition \ref{triangle} also implies that $SH_*(W';\mathbb{Z})$ is  finitely generated, since both $H^{n-*}(W';\mathbb{Z})$ and $SH_*^+(W';\mathbb{Z})$ are finitely generated. Thus, by the classification of finitely generated abelian groups, $SH_*(W';\mathbb{Z})$ either contains a $\mathbb{Z}$ summand or a  $\mathbb{Z}/m$ summand. In either case, there exists a prime $p$, such that $SH_*(W';\mathbb{Z})\otimes_{\mathbb{Z}} \mathbb{Z}/p\ne 0$. By \eqref{universal}, this implies $SH_*(W;\mathbb{Z}/p)\ne 0$, in contradiction to $SH_*(W;\bm{k})=0$ for $\bm{k}=\mathbb{Z}/p$.
\end{proof}	

\begin{corollary}\label{cor}
	Under the assumptions of Theorem \ref{main}, there is an isomorphism $H^*(W;\mathbb{Z})\cong H^*(W';\mathbb{Z})$.
\end{corollary}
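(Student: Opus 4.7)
The plan is to combine Theorem \ref{main} with the exact triangle of Proposition \ref{triangle} and the contact invariance of positive symplectic homology (Proposition \ref{independent}) to transport the cohomology across the two fillings via $SH^+$.

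First I would invoke Theorem \ref{main} to upgrade the hypothesis $SH_*(W;\mathbb{Z})=0$ to the symmetric vanishing statement $SH_*(W;\mathbb{Z})=SH_*(W';\mathbb{Z})=0$. With both symplectic homologies vanishing, the long exact sequence underlying the triangle
$$H^{n-*}(W;\mathbb{Z})\to SH_*(W;\mathbb{Z})\to SH^+_*(W;\mathbb{Z})\xrightarrow{[-1]}H^{n-*}(W;\mathbb{Z})$$
collapses (and likewise for $W'$), so the connecting map furnishes degree-preserving isomorphisms
$$SH^+_*(W;\mathbb{Z})\cong H^{n+1-*}(W;\mathbb{Z}),\qquad SH^+_*(W';\mathbb{Z})\cong H^{n+1-*}(W';\mathbb{Z}).$$

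Next I would apply Proposition \ref{independent} to $(Y,\xi)$ with the two topologically simple fillings $W$ and $W'$ to obtain $SH^+_*(W;\mathbb{Z})\cong SH^+_*(W';\mathbb{Z})$. Chaining the three isomorphisms yields $H^{n+1-*}(W;\mathbb{Z})\cong H^{n+1-*}(W';\mathbb{Z})$ for every $*$, which after reindexing is exactly the asserted isomorphism $H^*(W;\mathbb{Z})\cong H^*(W';\mathbb{Z})$.

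The only delicate point I anticipate is making sure Proposition \ref{independent} is available with integer coefficients rather than just with a field $\bm{k}$; Lazarev's proof constructs the isomorphism via Viterbo-type transfer/continuation maps on the chain complexes generated by Reeb orbits, and those maps are defined over $\mathbb{Z}$ once one has coherent orientations (which are guaranteed here because $W,W'$ are topologically simple, so $c_1$ vanishes and Hamiltonian orbits are contractible). Assuming this, the argument above goes through without any universal coefficient detour. If instead one could only use Proposition \ref{independent} over fields, one would need the supplementary step of checking that the finitely generated abelian groups $H^*(W;\mathbb{Z})$ and $H^*(W';\mathbb{Z})$ agree over $\mathbb{Q}$ and over every $\mathbb{Z}/p$; but since Proposition \ref{triangle} already holds with $\mathbb{Z}$ coefficients, the direct integral route is the cleanest.
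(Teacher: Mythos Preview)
Your argument is correct and matches the paper's proof essentially verbatim: invoke Theorem \ref{main} to get $SH_*(W;\mathbb{Z})=SH_*(W';\mathbb{Z})=0$, use Proposition \ref{independent} for $SH^+_*(W;\mathbb{Z})\cong SH^+_*(W';\mathbb{Z})$, and read off $H^{n+1-*}(W;\mathbb{Z})\cong H^{n+1-*}(W';\mathbb{Z})$ from the collapsed triangles of Proposition \ref{triangle}. Your caveat about integer coefficients is unnecessary here, since Proposition \ref{independent} is stated (and used elsewhere in the paper) for an arbitrary coefficient ring $\bm{k}$.
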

\begin{proof}
	By Theorem \ref{main}, we have $SH_*(W;\mathbb{Z})=SH_*(W';\mathbb{Z})=0$. We also have $SH_*^+(W;\mathbb{Z})\cong SH_*^+(W';\mathbb{Z})$ by Proposition \ref{independent}. Then the long exact sequence in Proposition \ref{triangle} yields an isomorphism $H^{n+1-*}(W;\mathbb{Z})\cong SH^+_*(W;\bm{k})\cong SH^+_*(W';\bm{k})\cong H^{n+1-*}(W';\mathbb{Z})$.
\end{proof}

\begin{proof}[Proof of Corollary \ref{sub}]
	By Proposition \ref{top}, $W$ is a topologically simple filling. The contact manifold $(Y,\xi)$ is asymptotically dynamically convex by Proposition \ref{flexADC}. The flexible Weinstein domain $W$ has vanishing symplectic homology by \cite{bourgeois2012effect}. Then this corollary follows from  Theorem \ref{main} and Corollary \ref{cor}.
\end{proof}

In certain cases, cohomology group determines the diffeomorphism type. One of such examples is a simply connected $5$-manifold $Y^5$ with vanishing second Stiefel-Whitney class considered by Smale in \cite{smale1962structure}. Smale showed that $Y^5$ admits a $2$-connected filling $W^6$ and $W^6$ is unique up to the boundary connected sum with $(S^3\times S^3)\backslash D^6$. Moreover, all $2$-connected fillings have handle decompositions with only $0$ and $3$-handles. In particular, the diffeomorphism type of the $2$-connected filling $W^6$ is determined by the cohomology. Since $W^6$ admits an almost complex structure \cite[p.4]{lazarev2016contact}, $W^6$ admits a flexible Weinstein structure and $Y^5$ admits a contact structure $\xi$ by \cite[Theorem 1.5]{cieliebak2015symplectic}. Lazarev \cite[Corollary 1.3]{lazarev2016contact} proved all $2$-connected flexible fillings of $(Y^5,\xi)$ are diffeomorphic to each other. Using Corollary \ref{sub}, we can strengthen it to all $2$-connected Liouville fillings.
\begin{corollary}
	Let $(Y^5,\xi)$ be a simply-connected contact manifold with a $2$-connected flexible Weinstein filling. Then all $2$-connected Liouville fillings of $(Y^5,\xi)$ are diffeomorphic to each other.
\end{corollary}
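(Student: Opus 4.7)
The plan is to reduce the diffeomorphism statement to the cohomology statement of Corollary \ref{sub}, and then invoke Smale's classification of $2$-connected $6$-manifolds that is recalled in the paragraph immediately preceding the corollary.

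First I would verify that any $2$-connected Liouville filling $W'$ of $(Y^5,\xi)$ is automatically topologically simple in the sense of Definition \ref{topsimfilling}. Since $\pi_1(W')=0$, the map $\pi_1(Y^5)\to \pi_1(W')$ is trivially injective (both groups vanish, as $Y^5$ is simply-connected). Also $\pi_2(W')=0$ together with the Hurewicz theorem gives $H_2(W';\mathbb{Z})=0$, hence $H^2(W';\mathbb{Z})=0$ by the universal coefficient theorem, so in particular $c_1(W')=0$. The same reasoning applies to the flexible filling $W$, which is also $2$-connected by assumption.

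Next I would apply Corollary \ref{sub} with $W$ as the flexible filling and $W'$ as the topologically simple Liouville filling. This yields an isomorphism
\begin{equation*}
H^*(W;\mathbb{Z})\cong H^*(W';\mathbb{Z}).
\end{equation*}
Finally, I would invoke Smale's structure theorem quoted in the paragraph before the corollary: any $2$-connected compact $6$-manifold bounding $Y^5$ admits a handle decomposition with only $0$- and $3$-handles, and its diffeomorphism type is determined by its cohomology. Applying this to both $W$ and $W'$ with the isomorphism above produces a diffeomorphism $W\cong W'$. Transitivity of the diffeomorphism relation then gives the result for any two $2$-connected Liouville fillings.

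I do not expect any genuine obstacle: the input from symplectic topology is entirely packaged in Corollary \ref{sub}, and the topological conclusion is a direct appeal to the Smale classification recalled in the text. The only point worth stating carefully is the verification that $2$-connected fillings automatically satisfy the topologically simple hypothesis, so that Corollary \ref{sub} applies without additional assumptions on $W'$.
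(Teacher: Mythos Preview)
Your proposal is correct and follows exactly the approach the paper intends: the corollary is stated without a separate proof because it is meant to follow immediately from Corollary \ref{sub} together with Smale's classification recalled in the preceding paragraph, and your write-up simply makes those implicit steps explicit. The only small point you leave tacit is that $c_1(\xi)=0$ (needed in the hypotheses of Corollary \ref{sub}); this follows since $c_1(\xi)=c_1(W)|_Y$ and you have already checked $c_1(W)=0$.
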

Moreover, we have the following corollary for cobordisms.
\begin{corollary}\label{cob}
	Suppose $(Y,\xi)$ is simply-connected and has a flexible Weinstein filling. Assume there is a Liouville cobordism $E$ from $(Y_0,\xi_0)$ to $(Y,\xi)$ such that $c_1(E)=0$. Let $F$ be a Liouville filling of $(Y_0,\xi_0)$, such that $c_1(F)=0$  and $\iota_1^*-\iota_2^*: H^1(E)\oplus H^1(F) \rightarrow H^1(Y_0)$ is surjective, where $\iota_1, \iota_2: Y_0\rightarrow E,F$ are the inclusions. Then we have $SH_*(F;\mathbb{Z})=0$
\end{corollary}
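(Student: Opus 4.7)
The plan is to glue $F$ to the cobordism $E$ along $Y_0$, obtain a Liouville filling $W' := F \cup_{Y_0} E$ of $(Y,\xi)$, verify that $W'$ is topologically simple, apply Corollary \ref{sub} to conclude $SH_*(W';\mathbb{Z})=0$, and then descend this vanishing to $F$ via the Viterbo transfer map of Proposition \ref{transfer}.

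First I would verify the topological simplicity of $W'$. The condition that $\pi_1(Y)\to \pi_1(W')$ is injective holds trivially because $(Y,\xi)$ is simply-connected. The substantive point is $c_1(W')=0$, and here I would invoke the Mayer--Vietoris sequence
\[
\ldots \to H^1(E)\oplus H^1(F) \xrightarrow{\iota_1^*-\iota_2^*} H^1(Y_0) \to H^2(W') \to H^2(E)\oplus H^2(F) \to H^2(Y_0) \to \ldots
\]
The surjectivity hypothesis on $\iota_1^*-\iota_2^*$ is precisely what forces the restriction map $H^2(W')\to H^2(E)\oplus H^2(F)$ to be injective; since $c_1(E)=c_1(F)=0$ and the first Chern class is natural, this implies $c_1(W')=0$.

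Second, Corollary \ref{sub} applied to the topologically simple filling $W'$ of the flexibly fillable contact manifold $(Y,\xi)$ yields $SH_*(W';\mathbb{Z})=0$. Since $F$ embeds naturally as a Liouville subdomain of $W'$, Proposition \ref{transfer} provides a unital ring homomorphism $SH_*(W';\mathbb{Z})\to SH_*(F;\mathbb{Z})$; as the source vanishes, its unit is zero, and by unitality the unit of $SH_*(F;\mathbb{Z})$ must also vanish, forcing $SH_*(F;\mathbb{Z})=0$.

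The main obstacle is the topological simplicity check for $W'$: the cohomological surjectivity hypothesis on $\iota_1^*-\iota_2^*$ is doing exactly the work needed to ensure $c_1(W')=0$, since otherwise the map $H^2(W')\to H^2(E)\oplus H^2(F)$ might have nontrivial kernel and Corollary \ref{sub} would not apply. Once topological simplicity is in hand, the remainder of the argument is formal, combining Corollary \ref{sub} with the unitality of the Viterbo transfer.
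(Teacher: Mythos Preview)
Your proposal is correct and matches the paper's proof essentially step for step: glue $F$ and $E$ to a Liouville filling of $Y$, use Mayer--Vietoris and the surjectivity hypothesis to get $c_1=0$, invoke simple connectivity of $Y$ for the $\pi_1$ condition, apply Corollary~\ref{sub}, and then push the vanishing down to $F$ via the unital Viterbo transfer map.
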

\begin{proof}
	We can glue $E$ and $F$ to obtain a Liouville filling  $E\cup_{Y_0} F$  of  $(Y,\xi)$. Let $\tau_1:E\rightarrow E\cup_{Y_0}F, \tau_2:F\rightarrow E\cup_{Y_0}F$ denote the inclusions and consider the Mayer-Vietoris sequence
	$$\ldots \longrightarrow H^1(E)\oplus H^1(F) \stackrel{\iota_1^*-\iota_2^*}{\longrightarrow} H^1(Y_0) \longrightarrow H^2(E\cup_{Y_0} F)\stackrel{\tau_1^*\oplus \tau_2^*}{\longrightarrow}  H^2(E)\oplus H^2(F) \longrightarrow \ldots.$$
	Since $\iota_1^*-\iota_2^*: H^1(E)\oplus H^1(F)\rightarrow H^1(Y_0)$ is surjective, $\tau_1^*\oplus \tau_2^*:H^2(E\cup_{Y_0} F)\rightarrow  H^2(E)\oplus H^2(F)$ is injective. Note that $\tau_1^*(c_1(E\cup_{Y_0} F))=c_1(E)$ and $\tau_2^*(c_1(E\cup_{Y_0} F))=c_1(F)$, therefore $c_1(E)=c_1(F)=0$ implies that $c_1(E\cup_{Y_0} F)=0$. Since $\pi_1(Y)=0$, $E\cup_{Y_0}F$ is a topologically simple Liouville filling of $Y$. Since $Y$ has a flexible Weinstein filling by assumption, we have ${SH_*(E\cup_{Y_0} F;\mathbb{Z})}=0$ by Corollary \ref{sub}. Since the Viterbo transfer map $SH_*(E\cup_{Y_0} F;\mathbb{Z})\rightarrow SH_*(F;\mathbb{Z})$ is a unital ring map by Proposition \ref{transfer}, we obtain $SH_*(F;\mathbb{Z})=0$. 
\end{proof}	

Although Theorem \ref{main} is stated for asymptotic dynamical convex contact manifold admitting a topological simple Liouville filling with vanishing symplectic homology, the main examples are flexibly fillable contact manifolds. It is not clear to us whether there are other cases where Theorem \ref{main} can be applied. However, using Corollary \ref{cob}, we can show the vanishing of symplectic homology for a class of contact manifolds, which might not be flexibly fillable. Murphy-Siegel \cite{murphy2018subflexible} introduced the concept of subflexible manifolds, i.e. sublevel sets of flexible Weinstein domains and showed that subflexible domains are not necessarily flexible.

\begin{theorem}[{\cite[Theorem 1.4]{murphy2018subflexible}}]
	Let $X$ be a flexible Weinstein domain with $\dim X \ge 6$ and $c_1(X) = 0$. Then there is a Weinstein domain $X'$ such that
	\begin{itemize}
		\item $X'$ is a sublevel set of a flexible Weinstein domain, which is Weinstein deformation equivalent to $X$;
		\item $X'$ has nonvanishing twisted symplectic cohomology \cite[\S 3]{murphy2018subflexible}, hence $X'$ is not flexible. 
	\end{itemize} 
\end{theorem}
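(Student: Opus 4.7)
The plan is to construct $X'$ explicitly for any given flexible $X$ and then verify the two required properties in turn. The strategy is reverse engineering: first design a Weinstein domain $X'$ that contains a non-contractible Reeb orbit $\gamma$ realizing a non-trivial free-homotopy class in $\pi_0(\cL X')$ (where $\cL X'$ is the free loop space), and then enlarge $X'$ to a flexible Weinstein domain $X_0$ by attaching loose Legendrian and subcritical handles. For the initial $X'$, a natural choice is to take a Weinstein domain containing a copy of $T^*L$ for some closed manifold $L$ with non-trivial $\pi_1$, perhaps attached to a ball via a plumbing construction, so that Reeb orbits along the cotangent fibre directions realize non-trivial loop classes.

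To realize $X'$ as a sublevel set of a flexible Weinstein domain deformation equivalent to $X$, I would apply Murphy's h-principle for loose Legendrians: starting from $X'$, attach subcritical handles and index-$n$ handles along loose Legendrian attaching spheres to build a flexible Weinstein domain $X_0$. The h-principle allows enormous freedom in the attaching data. By Cieliebak-Eliashberg's uniqueness theorem for flexible Weinstein structures in dimension $\ge 6$, the Weinstein deformation class of $X_0$ is determined by its underlying smooth manifold together with its almost-complex structure. One then tunes the handle attachments so that this topological data matches that of $X$, concluding that $X_0$ and $X$ are Weinstein deformation equivalent and that $X'\subset X_0$ as a sublevel set.

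Next, compute the twisted symplectic cohomology $SH^*(X';\eta)$ for a local system $\eta$ on $\cL X'$ that is non-trivial on the free-homotopy class of $\gamma$. The positive chain complex splits according to free-homotopy sectors and the Floer differential preserves each sector, so the sector containing $\gamma$ contributes a non-trivial class to $SH^*(X';\eta)$. The main obstacle I anticipate is the delicate compatibility between the two construction stages: the loose-handle attachments that build $X_0$ out of $X'$ will typically trivialize the class of $\gamma$ in $\pi_0(\cL X_0)$ (this is in fact unavoidable, since one often needs $X_0$ simply-connected to match the topology of $X$), yet they must not act on $X'$ itself so as to make $\gamma$ contractible inside $X'$, and the final $X_0$ must carry the right smooth/almost-complex data to be deformation equivalent to $X$. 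A secondary but essential ingredient is the extension of Bourgeois-Ekholm-Eliashberg's vanishing theorem from untwisted to twisted symplectic cohomology — otherwise non-vanishing of $SH^*(X';\eta)$ would not actually obstruct flexibility of $X'$. With these in place, the non-vanishing twisted cohomology class forces $X'$ to be non-flexible, completing the proof.
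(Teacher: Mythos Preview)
The paper under review does not prove this theorem. It is quoted verbatim from Murphy--Siegel \cite[Theorem~1.4]{murphy2018subflexible} as an external input, and the surrounding text immediately passes to a corollary that \emph{uses} the statement together with Corollary~\ref{cob}. There is therefore no ``paper's own proof'' to compare your proposal against.

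That said, since you have written a proof sketch, a brief remark on how it lines up with the actual Murphy--Siegel argument may be useful. Your overall architecture --- build a domain with a non-trivial free-homotopy class of loops, detect it via twisted symplectic cohomology, and embed it as a sublevel set of something flexible using the h-principle and the Cieliebak--Eliashberg uniqueness theorem --- is in the right spirit. However, Murphy--Siegel proceed in the opposite direction: they start from the given flexible $X$ and perform an explicit handle-theoretic modification (their ``subflexibilization'') that produces $X'$ as a sublevel set by construction, with the ambient domain manifestly Weinstein deformation equivalent to $X$. Your bottom-up route (start from a plumbing containing $T^*L$, then attach handles to reach the topology of an \emph{arbitrary} prescribed $X$) has a genuine gap at the step ``tune the handle attachments so that this topological data matches that of $X$'': you would need to realize an arbitrary almost-Weinstein homotopy type as a flexible extension of your fixed seed $X'$, and you have not argued why the necessary surgery presentation exists or why it can be made loose/subcritical without destroying the class in $\pi_0(\cL X')$ you need. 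Starting from $X$ and carving out $X'$, as Murphy--Siegel do, sidesteps this matching problem entirely.
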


\begin{corollary}
	Let $X$ be a flexible Weinstein domain with $\dim X \ge 6$ and $\dim X \ne 8$ and $c_1(X) = 0$. Assume that $\partial X $ is simply connected. Let $X'$ be the nonflexible subflexible domain constructed in \cite[Theorem 1.4]{murphy2018subflexible}. If $F$ is a Liouville filling of $\partial X'$ with $c_1(F) = 0$, then $SH_*(F;\mathbb{Z}) = 0$.
\end{corollary}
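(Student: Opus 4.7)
The plan is to reduce the statement directly to Corollary \ref{cob}, taking $Y = \partial X$ and $Y_0 = \partial X'$. The required Liouville cobordism is supplied by Murphy--Siegel's construction itself: by \cite[Theorem 1.4]{murphy2018subflexible}, $X'$ is a sublevel set of a flexible Weinstein domain $\widetilde{X}$ that is Weinstein-deformation equivalent to $X$, so $(\partial\widetilde{X},\widetilde{\xi})$ is contactomorphic to $(\partial X,\xi)$. Setting
\[
E \;:=\; \widetilde{X}\setminus\mathrm{int}(X')
\]
produces a Weinstein cobordism from $(\partial X',\xi')$ up to $(\partial X,\xi)$, whose top boundary is simply-connected and flexibly fillable by $\widetilde{X}\simeq X$, matching the structural hypothesis of Corollary \ref{cob}.

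The Chern-class hypotheses of Corollary \ref{cob} are immediate: $c_1(E) = 0$ follows from the inclusion $E\hookrightarrow\widetilde{X}$ together with $c_1(\widetilde{X}) = c_1(X) = 0$, and $c_1(F) = 0$ is part of the assumption on $F$. The remaining condition to verify, and what I expect to be the main obstacle, is the cohomological surjectivity
\[
\iota_1^* - \iota_2^* \colon H^1(E;\Z) \oplus H^1(F;\Z) \twoheadrightarrow H^1(\partial X';\Z),
\]
where $\iota_1,\iota_2 \colon \partial X' \hookrightarrow E,F$ are the inclusions.

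My approach to this condition would be through the Mayer--Vietoris sequence for the decomposition $\widetilde{X} = X' \cup_{\partial X'} E$,
\[
H^1(\widetilde{X};\Z) \to H^1(X';\Z) \oplus H^1(E;\Z) \to H^1(\partial X';\Z) \to H^2(\widetilde{X};\Z),
\]
to first see that $H^1(X';\Z) \oplus H^1(E;\Z) \to H^1(\partial X';\Z)$ is surjective (equivalently that $H^2(\widetilde{X};\Z) \to H^2(X';\Z)\oplus H^2(E;\Z)$ is injective), and then to argue that the $H^1(X';\Z)$ contribution can be replaced by $H^1(F;\Z)$ via the common boundary $\partial X'$. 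I expect the dimension restriction $\dim X \ne 8$ enters precisely to control the smooth topology of the sublevel set boundary in \cite{murphy2018subflexible} --- plausibly so that $\partial X'$ itself is simply-connected, in which case $H^1(\partial X';\Z) = 0$ and the surjectivity is trivially satisfied. Once the surjectivity is established, Corollary \ref{cob} directly yields $SH_*(F;\Z) = 0$, completing the proof. The hardest part would be verifying the surjectivity if $\partial X'$ fails to be simply-connected, since then one needs a finer analysis of which $H^1$-classes on $\partial X'$ are visible in an arbitrary Liouville filling $F$ with $c_1(F)=0$.
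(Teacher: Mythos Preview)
Your overall strategy is exactly the paper's: take the cobordism $E=\widetilde{X}\setminus\mathrm{int}(X')$ from $\partial X'$ to $\partial X$, check $c_1(E)=0$, and apply Corollary~\ref{cob}. Where you diverge is in verifying the surjectivity of $\iota_1^*-\iota_2^*:H^1(E)\oplus H^1(F)\to H^1(\partial X')$, and here your two proposed routes both fall short.

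The Mayer--Vietoris approach only gives surjectivity of $H^1(X')\oplus H^1(E)\to H^1(\partial X')$, and there is no mechanism to swap $H^1(X')$ for $H^1(F)$: an arbitrary Liouville filling $F$ with $c_1(F)=0$ carries no a~priori relation to $X'$ beyond sharing the boundary. Your alternative guess, that $\partial X'$ is simply connected, is not what the paper uses and is not established.

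The paper's observation is that the dimension restriction controls the \emph{handle structure of $E$}, not the topology of $\partial X'$. In the Murphy--Siegel construction, when $\dim X\ge 6$ and $\dim X\neq 8$ the cobordism $E$ is obtained from a collar on $\partial X'$ by attaching handles of index $\ge 3$; hence the inclusion $\partial X'\hookrightarrow E$ is a $2$-equivalence and $\iota_1^*:H^1(E)\to H^1(\partial X')$ is already surjective (indeed an isomorphism). This makes the surjectivity condition in Corollary~\ref{cob} automatic, independently of $F$. (In dimension $8$ the construction involves a $2$-handle killing a class in $H^1(\partial X')$, which is why that case is excluded.) Once you have this, your reduction to Corollary~\ref{cob} goes through verbatim.
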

\begin{proof}
	By the construction in the proof of \cite[Theorem 1.4]{murphy2018subflexible}, there is a Liouville cobordism $E$ from $\partial X'$ to $\partial X$ such that $c_1(E) = 0$. When $\dim X \ge 6$ and $\dim X \ne 8$, $E$ is constructed by attaching $k$-handles for $k\ge 3$ to $\partial X'$.\footnote{When $\dim X = 8$, $E$ has a $2$-handle that kills a generator of $H^1(\partial X')$.} Therefore $\iota_1^*:H^1(E) \to H^1(\partial X')$ is surjective. Then by Corollary \ref{cob},  we have $SH_*(F;\mathbb{Z}) = 0$.
\end{proof}

\begin{remark}
	It is not clear whether $\partial X'$ is flexibly fillable. But $\partial X'$ either provides an example of flexibly fillable contact manifold with different Weinstein fillings or is not flexibly fillable but vanishing result still holds.
\end{remark}

Corollary \ref{sub} also provides  the following obstruction to the existence of flexible Weinstein fillings.
\begin{corollary}\label{cri}
	Suppose $(Y,\xi)$ is a contact manifold. If there is a topologically simple Liouville filling $W'$ of $Y$  such that $SH_*(W';\mathbb{Z})\ne 0$. then there are no flexible Weinstein fillings for $(Y,\xi)$.
\end{corollary}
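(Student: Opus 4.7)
The plan is to observe that Corollary \ref{cri} is essentially the contrapositive of Corollary \ref{sub}, so the argument is very short. Suppose for contradiction that $(Y,\xi)$ admits a flexible Weinstein filling $W$. Since $W'$ is a topologically simple Liouville filling, we have $c_1(W') = 0$, and restricting to the boundary gives $c_1(\xi) = 0$. Thus the hypotheses of Corollary \ref{sub} are met: $(Y,\xi)$ has a flexible Weinstein filling, $c_1(\xi) = 0$, and $W'$ is a topologically simple Liouville filling of $Y$.

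Applying Corollary \ref{sub} directly, one concludes $SH_*(W';\mathbb{Z}) = 0$. This contradicts the standing hypothesis that $SH_*(W';\mathbb{Z}) \neq 0$, so no flexible Weinstein filling of $(Y,\xi)$ can exist.

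There is essentially no obstacle here: all the technical work has been absorbed into Theorem \ref{main} and Corollary \ref{sub}. The only thing worth being careful about is verifying that the hypothesis $c_1(\xi) = 0$ needed in Corollary \ref{sub} is automatic from the topological simplicity of $W'$, which follows because $c_1(\xi)$ is the pullback of $c_1(W')$ under the inclusion $Y \hookrightarrow W'$. The corollary is really an application-oriented restatement emphasizing that nonvanishing symplectic homology of any single topologically simple Liouville filling suffices as an obstruction, without requiring one to survey all possible Weinstein fillings.
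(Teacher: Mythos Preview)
Your proof is correct and follows exactly the same contrapositive argument as the paper, which simply assumes a flexible filling exists and invokes Corollary~\ref{sub} to force $SH_*(W';\mathbb{Z})=0$, a contradiction. You are slightly more careful than the paper in explicitly verifying the hypothesis $c_1(\xi)=0$ from the topological simplicity of $W'$, but otherwise the arguments are identical.
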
	
In particular, if there is a Weinstein filling $W'$ of $Y$ such that $SH_*(W';\mathbb{Z}) \ne 0$, then there are no flexible Weinstein fillings for $(Y,\xi)$.

\begin{proof}[Proof of Corollary \ref{cri}]
	Assume otherwise that $Y$ has a flexible Weinstein filling $W$. Then by Corollary \ref{sub}, $SH_*(W';\mathbb{Z})=0$, which contradicts the condition. 
\end{proof}

\section{Applications in flexible fillability}
As an application of the obstruction considered in Corollary \ref{cri}, we show that all the Brieskorn manifolds \cite[Definition 2.7]{kwon2016brieskorn} with dimension greater or equal to $5$ cannot be filled by flexible Weinstein domains. We review some basics of Brieskorn manifolds following \cite{kwon2016brieskorn}. Let $a=(a_0,\ldots, a_n)$ be an $(n+1)$-tuple of integers, such that $a_i\ge 2$ for all $i=0,\ldots,n$.  The Brieskorn manifold $\Sigma(a)$ is defined as follows:
$$\Sigma(a):=\left\{z\in \mathbb{C}^{n+1}\bigg| z_0^{a_0}+\ldots+ z_n^{a_n}=0,\quad \sum_{j=0}^n|z_j|^2=1\right\}.$$
It has a contact form
\begin{equation}\label{contact}
\alpha_a=\frac{i}{8} \sum_{j=1}^na_j(z_j\D\overline{z}_j-\overline{z}_j\D z_j).\end{equation}
For $\epsilon$ sufficiently small, the Brieskorn manifold $(\Sigma(a),\xi_a:=\ker \alpha_a)$ has a Weinstein filling:
\begin{equation}\label{filling}V_\epsilon(a)=\left\{z\in \mathbb{C}^{n+1}\bigg| z_0^{a_0}+\ldots +z_n^{a_n}=\epsilon, \quad \sum_{j=0}^n|z_j|^2\le 1\right\}\end{equation}
with the  symplectic form induced from $\mathbb{C}^{n+1}$; see \cite[\S 2]{kwon2016brieskorn}.  Moreover, $V_{\epsilon}(a)$ is homotopy equivalent to $\bigvee^{\prod_{j=0}^n(a_j-1)}S^n$, the wedge of $\prod_{j=0}^n(a_j-1)$ spheres \cite[Proposition 3.2]{kwon2016brieskorn},  $\Sigma(a)$ is $(n-2)$-connected \cite[Proposition 3.5]{kwon2016brieskorn}.

Lazarev showed that the exotic spheres constructed  in \cite{ustilovsky1999infinitely}, which are 
$\Sigma(p,2,\ldots,2)\cong S^{4m+1}$ for $p\equiv \pm 1 \mod 8$, cannot be filled by flexible Weinstein domain using the fact that there exists $k \ge n+2$ such that $SH_k^+(V_\epsilon(p,2,\ldots,2))\ne 0$. Such argument can be applied to many Brieskorn manifolds, whose (positive) symplectic cohomology have been computed \cite{kwon2016brieskorn,uebele2016symplectic}.  The argument might be possible to generalize  to all Brieskorn manifolds by studying the Conley-Zehnder indexes carefully. However, in the following, we give a simple alternative proof for all Brieskorn manifolds using Corollary  \ref{cri}. 

\begin{theorem}\label{brieskorn}
	The Brieskorn manifolds of dimension $\ge 5$ cannot be filled by flexible Weinstein domains.
\end{theorem}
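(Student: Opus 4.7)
The plan is to apply Corollary \ref{cri} to the natural Weinstein filling $W' = V_\epsilon(a)$ described in \eqref{filling}. This reduces the proof to verifying that $V_\epsilon(a)$ is topologically simple and that $SH_*(V_\epsilon(a);\mathbb{Z}) \ne 0$.

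Topological simplicity follows from the facts recalled before the theorem statement. The hypothesis $\dim \Sigma(a) \ge 5$ forces $n \ge 3$, and since $V_\epsilon(a) \simeq \bigvee^{\prod_{j=0}^n(a_j-1)} S^n$, we have $\pi_1(V_\epsilon(a)) = 0$ (so the required injectivity of $\pi_1(\Sigma(a)) \to \pi_1(V_\epsilon(a))$ is automatic) and $H^2(V_\epsilon(a);\mathbb{Z}) = 0$, whence $c_1(V_\epsilon(a)) = 0$.

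The core of the argument is showing $SH_*(V_\epsilon(a);\mathbb{Z}) \ne 0$. My plan is to exploit the existence of an exact Lagrangian sphere $L \cong S^n$ inside $V_\epsilon(a)$, obtained as a vanishing cycle coming from any Morsification of the isolated hypersurface singularity $z_0^{a_0} + \cdots + z_n^{a_n} = 0$; in the model case $a = (2,\ldots,2)$ this is just the zero section of $V_\epsilon(a) \cong T^*S^n$. I then invoke the Viterbo--Ritter closed--open restriction theorem: for a Liouville domain $W$ with $c_1(W) = 0$ and a closed exact Lagrangian $L \subset W$, there is a unital ring homomorphism $SH^*(W;\bm{k}) \to H^*(L;\bm{k})$ for every field $\bm{k}$. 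In particular, $SH^*(W;\bm{k}) = 0$ would send the unit of $SH^*$ to $0$, forcing $1 = 0$ in $H^0(L;\bm{k}) = \bm{k}$, which is absurd since $L = S^n$ is nonempty and connected. Hence $SH_*(V_\epsilon(a);\bm{k}) \ne 0$ for every field $\bm{k}$, and the universal coefficient sequence \eqref{universal} upgrades this to $SH_*(V_\epsilon(a);\mathbb{Z}) \ne 0$. Corollary \ref{cri} then rules out any flexible Weinstein filling of $\Sigma(a)$.

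The main delicate point is citing the correct form of the closed--open restriction map with integer coefficients. However, $V_\epsilon(a)$ is simply connected with $c_1 = 0$ and $L = S^n$ is simply connected and spin whenever $n \ge 3$, so the standard formulation of the theorem applies without orientation, grading, or Maslov-class subtleties, and the rest is bookkeeping.
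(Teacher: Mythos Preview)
Your proof is correct and follows the same overall template as the paper: verify that the Milnor fibre $V_\epsilon(a)$ is a topologically simple filling with $c_1=0$, show its symplectic homology is nonzero, and invoke Corollary~\ref{cri}. The only substantive difference is in the step $SH_*(V_\epsilon(a);\mathbb{Z})\ne 0$.

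The paper obtains this by citing \cite[Theorem~1.2]{kwon2016brieskorn}, which actually computes the symplectic homology of Brieskorn varieties. You instead exploit the existence of an exact Lagrangian vanishing sphere $L\cong S^n\subset V_\epsilon(a)$ and the unital ring map $SH^*(V_\epsilon(a);\bm{k})\to H^*(L;\bm{k})$; nontriviality of the target forces nontriviality of the source, and universal coefficients passes this to $\mathbb{Z}$. This is a perfectly valid alternative, and arguably more robust: it applies to any Liouville domain containing a closed exact Lagrangian, without needing any explicit computation of $SH$. Note that your argument can be rephrased entirely within the toolkit already assembled in the paper by composing the Viterbo transfer map of Proposition~\ref{transfer} to a Weinstein neighbourhood $D^*S^n$ of $L$ with Viterbo's isomorphism $SH_*(D^*S^n)\cong H_*(\Lambda S^n)\ne 0$; this avoids having to import the closed--open map separately. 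One small omission: the standing hypothesis of the paper is $c_1(\xi)=0$, and the paper's proof records $c_1(\xi_a)=c_1(V_\epsilon(a))|_{\Sigma(a)}=0$ explicitly; you have $c_1(V_\epsilon(a))=0$ so this is immediate, but it is worth stating.
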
	
\begin{proof}
	We have $H^2(V_{\epsilon}(a))=H^2\left(\bigvee^{\prod_{j=0}^n(a_j-1)} S^n\right)=0$, since $n\ge 3$ by assumption. Therefore the first Chern class $c_1(V_\epsilon(a))$  vanishes. Since $c_1(\xi_a)=c_1(V_{\epsilon}(a))|_{\Sigma(a)}$, we have $c_1(\xi_a)=0$.  By \cite[Theorem 1.2]{kwon2016brieskorn}, we know that $SH_{*}(V_\epsilon(a);\mathbb{Z})\ne 0$. By Corollary \ref{cri},  $\Sigma(a)$ cannot be filled by flexible Weinstein domains. 
\end{proof}	

Cieliebak-Eliashberg \cite[Theorem 17.2]{cieliebak2012stein} proved that for dimension $\ge 6$, every almost Weinstein domain $(W,J)$ with $ c_1(W,J)=0$ admits infinitely many non-symplectomorphic Weinstein structures $W_k:=\natural^k \mathbb{C}^n_M\natural W_0$, where $W_0$ is the flexible Weinstein domain in the homotopy class of the almost Weinstein structure $(W,J)$ and $\mathbb{C}^n_M$ is the exotic $\mathbb{C}^n$ constructed in \cite{mclean2009lefschetz}. Lazarev \cite[Remark 1.10]{lazarev2016contact} proved that the Cieliebak-Eliashberg-McLean contact manifolds $\partial W_k$ cannot be filled by flexible Weinstein domains except for possibly $\dim H^1(\partial W, \mathbb{Z}/2)+1$ values of $k\in \mathbb{N}$. By Proposition \ref{sum}, since $SH_*(\mathbb{C}^n_M)\ne 0$, we have $SH_*(W_k;\mathbb{Z})\cong SH_*(W_0;\mathbb{Z})\oplus_{i=1}^k SH_*(\mathbb{C}^n_M;\mathbb{Z})=\oplus_{i=1}^kSH_*(\mathbb{C}^n_{M};\mathbb{Z})\ne 0$. Then using Corollary \ref{cri}, we have the following more general result.

\begin{corollary}
	When $k\in \mathbb{N}^+$,  $\partial W_k$ cannot be filled by flexible Weinstein domains.
\end{corollary}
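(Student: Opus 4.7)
The plan is to exhibit $W_k$ itself as a topologically simple Liouville filling of $\partial W_k$ with nonvanishing integral symplectic homology, and then invoke Corollary \ref{cri} directly. This reduces the problem to two ingredients that are already essentially assembled in the excerpt preceding the statement: a topological check and an iterated boundary connected sum computation.

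First, I would verify the hypotheses of Corollary \ref{cri} for $W' = W_k$. Since $W_k$ is by construction a Weinstein domain of dimension $\ge 6$ with $c_1(W_k) = 0$ (inherited from $c_1(W,J) = 0$ and $c_1(\mathbb{C}^n_M) = 0$), Proposition \ref{top} applies and shows that $W_k$ is topologically simple. In particular the induced contact structure on $\partial W_k$ has $c_1 = 0$, so the setup of Corollary \ref{cri} is legitimate.

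Next, I would compute $SH_*(W_k;\mathbb{Z})$ by iterating the boundary connected sum formula of Proposition \ref{boundarysum}. Applied $k$ times to the decomposition $W_k = \natural^k \mathbb{C}^n_M \natural W_0$, whose pieces all have vanishing first Chern class, this gives
\[
SH_*(W_k;\mathbb{Z}) \;\cong\; SH_*(W_0;\mathbb{Z}) \oplus \bigoplus_{i=1}^k SH_*(\mathbb{C}^n_M;\mathbb{Z}).
\]
Flexibility of $W_0$ kills the first summand by the Bourgeois--Ekholm--Eliashberg vanishing result (as used in the proof of Corollary \ref{sub}), while McLean's construction ensures $SH_*(\mathbb{C}^n_M;\mathbb{Z}) \neq 0$. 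Hence $SH_*(W_k;\mathbb{Z}) \neq 0$ whenever $k \ge 1$.

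With both hypotheses verified, Corollary \ref{cri} applied to the filling $W' = W_k$ yields that $\partial W_k$ admits no flexible Weinstein filling, which is the desired conclusion. The only non-routine piece is the topological simplicity of $W_k$, which is the whole reason Proposition \ref{top} was included; everything else is a direct assembly of the propositions already proved. I do not anticipate a genuine obstacle here — the work was done in setting up the framework, and this corollary is essentially a short application illustrating it.
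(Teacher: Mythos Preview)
Your proposal is correct and follows essentially the same route as the paper: compute $SH_*(W_k;\mathbb{Z})$ via the boundary connected sum formula (Proposition \ref{boundarysum}), use flexibility of $W_0$ and McLean's $SH_*(\mathbb{C}^n_M;\mathbb{Z})\neq 0$ to see it is nonzero for $k\ge 1$, and apply Corollary \ref{cri}. Your added explicit check of topological simplicity via Proposition \ref{top} is exactly what underlies the remark following Corollary \ref{cri} that a Weinstein filling with nonvanishing symplectic homology already suffices.
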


\bibliographystyle{plain} 
\bibliography{ref}

\begin{thebibliography}{10}

\bibitem{abreu2014dynamical}
Miguel Abreu and Leonardo Macarini.
\newblock Dynamical convexity and elliptic periodic orbits for reeb flows.
\newblock {\em Math. Ann.}, pages 1--56, 2017.

\bibitem{barth2016diffeomorphism}
Kilian Barth, Hansj{\"o}rg Geiges, and Kai Zehmisch.
\newblock The diffeomorphism type of symplectic fillings.
\newblock {\em arXiv preprint arXiv:1607.03310}, 2016.

\bibitem{bourgeois2012effect}
Fr{\'e}d{\'e}ric Bourgeois, Tobias Ekholm, and Yakov Eliashberg.
\newblock Effect of legendrian surgery.
\newblock {\em Geom. Topol.}, 16(1):301--389, 2012.

\bibitem{cieliebak2002handle}
Kai Cieliebak.
\newblock Handle attaching in symplectic homology and the chord conjecture.
\newblock {\em J. Eur. Math. Soc.}, 4(2):115--142, 2002.

\bibitem{cieliebak2012stein}
Kai Cieliebak and Yakov Eliashberg.
\newblock {\em From Stein to Weinstein and back: Symplectic geometry of affine
  complex manifolds}, volume~59.
\newblock American Mathematical Soc., 2012.

\bibitem{cieliebak2015symplectic}
Kai Cieliebak and Alexandru Oancea.
\newblock Symplectic homology and the eilenberg--steenrod axioms.
\newblock {\em Algebr. Geom. Topol.}, 18(4):1953--2130, 2018.

\bibitem{eliashberg1990filling}
Yakov Eliashberg.
\newblock Filling by holomorphic discs and its applications.
\newblock {\em Geometry of low-dimensional manifolds}, 2:45--67, 1990.

\bibitem{fauck2017cieliebak}
Alexander Fauck.
\newblock Cieliebak's invariance theorem and contact structures via connected
  sums.
\newblock {\em arXiv preprint arXiv:1708.02858}, 2017.

\bibitem{gromov1985pseudo}
Mikhael Gromov.
\newblock Pseudo holomorphic curves in symplectic manifolds.
\newblock {\em Invent. Math.}, 82(2):307--347, 1985.

\bibitem{hatcher2002algebraic}
Allen Hatcher.
\newblock {\em Algebraic Topology}.
\newblock Cambridge University Press, 2002.

\bibitem{hofer1999characterization}
H~Hofer, K~Wysocki, and K~Zehnder.
\newblock A characterization of the tight 3-sphere {II}.
\newblock {\em Comm. Pure Appl. Math.}, 52(9):1139--1177, 1999.

\bibitem{kwon2016brieskorn}
Myeonggi Kwon and Otto van Koert.
\newblock Brieskorn manifolds in contact topology.
\newblock {\em Bull. Lond. Math. Soc.}, 48(2):173--241, 2016.

\bibitem{lazarev2016contact}
Oleg Lazarev.
\newblock Contact manifolds with flexible fillings.
\newblock {\em arXiv preprint arXiv:1610.04837}, 2016.

\bibitem{mcduff1991symplectic}
Dusa McDuff.
\newblock Symplectic manifolds with contact type boundaries.
\newblock {\em Invent. Math.}, 103(1):651--671, 1991.

\bibitem{mclean2009lefschetz}
Mark McLean.
\newblock Lefschetz fibrations and symplectic homology.
\newblock {\em Geom. Topol.}, 13(4):1877--1944, 2009.

\bibitem{murphy2018subflexible}
Emmy Murphy and Kyler Siegel.
\newblock Subflexible symplectic manifolds.
\newblock {\em Geom. Topol.}, 22(4):2367--2401, 2018.

\bibitem{oancea2012topology}
Alexandru Oancea and Claude Viterbo.
\newblock On the topology of fillings of contact manifolds and applications.
\newblock {\em Comment. Math. Helv.}, pages 41--69, 2012.

\bibitem{ritter2013topological}
Alexander~F Ritter.
\newblock Topological quantum field theory structure on symplectic cohomology.
\newblock {\em J. Topol.}, 6(2):391--489, 2013.

\bibitem{seidel2008biased}
Paul Seidel.
\newblock A biased view of symplectic cohomology.
\newblock {\em Current developments in mathematics}, 2006:211--253, 2008.

\bibitem{smale1962structure}
Stephen Smale.
\newblock On the structure of 5-manifolds.
\newblock {\em Ann. of Math.}, pages 38--46, 1962.

\bibitem{uebele2016symplectic}
Peter Uebele.
\newblock Symplectic homology of some brieskorn manifolds.
\newblock {\em Math. Z.}, 283(1-2):243--274, 2016.

\bibitem{ustilovsky1999infinitely}
Ilya Ustilovsky.
\newblock Infinitely many contact structures on {$S^{4m+ 1}$}.
\newblock {\em Int. Math. Res. Not.}, 1999(14):781--791, 1999.

\bibitem{yau2004cylindrical}
Mei-Lin Yau.
\newblock Cylindrical contact homology of subcritical stein-fillable contact
  manifolds.
\newblock {\em §Geom. Topol.}, 8(3):1243--1280, 2004.

\end{thebibliography}
\end{document}